\newtheorem{teorema}{Theorem}
\newtheorem{lema}[teorema]{Lemma}
\newtheorem{proposicio}[teorema]{Proposition}
\newtheorem{corolari}[teorema]{Corollary}
\newtheorem{athm}{Theorem}
\theoremstyle{definition}
\newtheorem{definicio}[teorema]{Definition}
\newtheorem{nota}[teorema]{Remark}
\newtheorem{exemple}[teorema]{Example}
\newtheorem{question}[teorema]{Question}
\DeclareMathOperator{\C}{C}
\DeclareMathOperator{\GL}{GL}
\DeclareMathOperator{\Hol}{Hol}
\DeclareMathOperator{\Aut}{Aut}
\DeclareMathOperator{\Int}{Int}
\DeclareMathOperator{\Out}{Out}
\DeclareMathOperator{\Ker}{Ker}
\DeclareMathOperator{\Fix}{Fix}
\DeclareMathOperator{\Soc}{Soc}
\DeclareMathOperator{\Z}{Z}
\DeclareMathOperator{\Ret}{Ret}
\newcommand{\bigcdot}{\boldsymbol{\cdot}}
\title{Soluble skew left braces and soluble solutions of the Yang-Baxter equation}
\author{A. Ballester-Bolinches \thanks{Department of Mathematics, Guangdong University of Education, Guangzhou, Guangdong, 510303, People's Republic of China} \thanks{Departament de Matem\`atiques, Universitat de Val\`encia, Dr.\ Moliner, 50, 46100 Burjassot, Val\`encia, Spain, \texttt{Adolfo.Ballester@uv.es}, \texttt{Ramon.Esteban@uv.es}, \texttt{Vicent.Perez-Calabuig@uv.es}}\  \and R. Esteban-Romero\addtocounter{footnote}{-1}\footnotemark \and P. Jim\'enez-Seral\addtocounter{footnote}{1}\thanks{Departamento de Matem\'aticas, Universidad de Zaragoza, Pedro Cerbuna, 12, 50009 Zaragoza, Spain, \texttt{paz@unizar.es}}\and V. P\'erez-Calabuig\addtocounter{footnote}{-3}\footnotemark}
\date{}
\begin{document}
\maketitle

\begin{abstract}
The study of non-degenerate set-theoretic solutions of the Yang-Baxter equation calls for a deep understanding of the algebraic structure of a skew left brace. In this paper, the skew brace theoretical property of solubility is introduced and studied. It leads naturally to the notion of solubility of solutions of the Yang-Baxter equation. It turns out that soluble non-degenerate set-theoretic solutions are characterised by soluble skew left braces. The rich ideal structure of soluble skew left braces is also shown. A worked example showing the relevance of the brace theoretical property of solubility is also presented. 
\end{abstract}

\emph{Mathematics Subject Classification (2022): Primary 16T25, 20D10 Secondary 81R50}  

\emph{Keywords:} Skew left braces, Yang-Baxter equation, solubility, simple.

\section{Introduction and statements of results}
The study of set-theoretical solutions of the Yang-Baxter equation (YBE) provide a common framework for a multidisciplinary approach from different  areas including knot theory, braid theory, or operator theory among others. 

A (finite) \emph{set-theoretical solution} of the YBE is a pair $(X,r)$, where $X$ is a (finite) set and $r\colon X \times X \rightarrow X \times X$ is a bijective map satisfying the equality $r_{12}r_{23}r_{12} = r_{23}r_{12}r_{23}$, where $r_{12} = r\times \operatorname{id}_X$ and $r_{23} = \operatorname{id}_X \times\, r$. If $r(x,y) = (y,x)$ for every $x,y \in X$, then $(X,r)$ is said to be \emph{trivial}. 

The main problem in this context is to find and classify all set-theoretical solutions with prescribed properties. The algebraic structure of  skew left braces plays a fundamental role.

A \emph{skew left brace} $B$ is a set endowed with two group structures, $(B,+)$ and $(B,\cdot)$, satisfying the following sort of distributivity property
\begin{equation}
\label{eq:distbraces} a \cdot (b+c) = a\cdot b - a + a\cdot c\quad \forall\, a,b,c\in B
\end{equation}
If $(B,+)$ is abelian, then $B$ is just a Rump's \emph{left brace} (see~\cite{GuarnieriVendramin17} and~\cite{Rump07}). Both operations in a skew left brace $B$ can be related by the so-called star product: $a\ast b = -a + a\cdot b -b$, for all $a,b\in B$. Indeed, if both group operations coincide, equivalently $a\ast b = 0$, for all $a,b\in B$, then $B$ is said to be a \emph{trivial left brace}.  If $X$ and $Y$ are subsets of $B$, then $X \ast Y$ is the subgroup of $(B,+)$ generated by the elements of the form $x \ast y$, for all $x \in X$ and $y \in Y$.

Non-degenerate set-theoretic solutions of the YBE (solutions, for short), i.e. solutions for which both components of $r$ are bijective, lead naturally to a skew left brace structure over the group 
\[G(X,r) = \langle x \in X\,|\, xy = uv,\, \text{if $r(x,y) = (u,v)$}\rangle.\]
$G(X,r)$ is said to be the \emph{structure skew left brace of $(X,r)$}. Furthermore, we can define a solution $(B,r_B)$ associated to every skew left brace $B$ (see~\cite{GuarnieriVendramin17}).

It is abundantly clear that a deep understanding of skew left brace structure properties happens to be essential to comprehend and describe solutions of the Yang-Baxter equation.

One of the most challenging problems in the study of any algebraic structure is the classification of its simple objects. A non-zero skew left brace is said to be \emph{simple} if it does not have a non-trivial quotient. Simple skew left braces have been intensively studied with several results available, although we are still a long way off to complete their classification (see \cite{BachillerCedoJespersOkninski18, BachillerCedoJespersOkninski19, CedoJespersOkninski20}). A natural dual problem is to ask about skew left braces without any proper sub skew braces.  Our first main result gives the answer for finite braces and turns out to be crucial for the study of brace theoretical properties.

\begin{athm}
\label{teo:brides-sensesub}
Let $B$ be a finite skew left brace without proper sub skew braces. Then, $B$ is trivial and isomorphic to a group of prime order.
\end{athm}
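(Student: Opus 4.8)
The plan is to exploit the left action $\lambda$ attached to any skew left brace. Writing the additive identity as $0$ (which coincides with the multiplicative identity, since putting $a=1$ and $b=c=0$ in \eqref{eq:distbraces} gives $1=0$), the distributive law yields a homomorphism $\lambda\colon (B,\cdot)\to\Aut(B,+)$, $\lambda_a(b)=-a+a\cdot b$, so that $a\cdot b=a+\lambda_a(b)$. The crucial observation is that an additive subgroup $H\le(B,+)$ is automatically a multiplicative subgroup — hence a sub skew brace — as soon as $\lambda_a(H)=H$ for every $a\in H$: indeed then $a\cdot b=a+\lambda_a(b)\in H$ for $a,b\in H$, and finiteness upgrades closure under $\cdot$ to a subgroup. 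In particular every $\lambda(B)$-invariant additive subgroup, and a fortiori every characteristic subgroup of $(B,+)$ (being invariant under all of $\Aut(B,+)$), is a sub skew brace. Since $B$ has no proper sub skew braces, $(B,+)$ has no proper nontrivial characteristic subgroup; that is, $(B,+)$ is characteristically simple, whence $(B,+)\cong S^{k}$ for some simple group $S$.

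Next I would record two further sub skew braces. The kernel $\Ker\lambda$ is normal in $(B,\cdot)$, and on it $\cdot$ agrees with $+$, so it is additively closed and therefore a sub skew brace; likewise the set of common fixed points $\Fix\lambda=\{b: \lambda_a(b)=b\ \forall a\in B\}$ is an additive subgroup on which $\cdot=+$, so it too is a sub skew brace. Each must then equal $0$ or $B$. If $\Ker\lambda=B$ (equivalently $\Fix\lambda=B$), then $\lambda$ is trivial, the two operations coincide, every subgroup of the group $(B,\cdot)=(B,+)$ is a sub skew brace, and the absence of proper subgroups forces $B$ to be cyclic of prime order and trivial — exactly the claim. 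So it remains to rule out the opposite case, in which $\lambda$ is injective and $\Fix\lambda=0$.

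In that remaining case $(B,\cdot)\hookrightarrow\Aut(S^{k})$ acts faithfully on $(B,+)=S^{k}$ with no nonzero fixed point. Two reductions are clean. First, $\lambda(B)$ permutes the simple direct factors of $S^{k}$; were this permutation action intransitive, the product of the factors over a single orbit would be a proper nontrivial $\lambda(B)$-invariant additive subgroup, hence a forbidden sub skew brace, so the action is transitive. Second, if $S$ is abelian then $S\cong\mathbb{Z}/p$, so $(B,+)\cong(\mathbb{Z}/p)^{k}$ and $(B,\cdot)$ is a $p$-group; but a nontrivial $p$-group acting on a nonzero $\mathbb{F}_p$-vector space always fixes a nonzero vector, contradicting $\Fix\lambda=0$. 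Thus the additive group would have to be a power of a \emph{nonabelian} simple group.

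The hard part is precisely this last possibility. The most promising continuation is a fixed-point argument on $(B,\cdot)$: for each prime $p\mid|B|=|S|^{k}$ the characteristic subgroup $\mathrm{O}_p(B,\cdot)$ has, by the orbit-counting congruence $|\Fix\,\mathrm{O}_p(B,\cdot)|\equiv|B|\equiv0\pmod p$, a nonzero fixed subgroup, which is $\lambda(B)$-invariant and proper unless $\mathrm{O}_p(B,\cdot)$ acts trivially; being a sub skew brace it must equal $B$ or $0$, forcing (by faithfulness) $\mathrm{O}_p(B,\cdot)=1$ for every such $p$, and hence $\Fit(B,\cdot)=1$. One is then left with a group of order $|S|^{k}$ having trivial Fitting subgroup and acting faithfully and fixed-point-freely on $S^{k}$ with no proper invariant additive subgroup. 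I expect the genuine obstacle to lie in excluding these almost-simple and semisimple configurations, which seems to require the finer structure theory of finite (almost) simple groups rather than brace-theoretic manipulation alone.
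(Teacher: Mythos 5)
Your reductions are correct as far as they go: the treatment of the case $\Ker\lambda=B$, the proof that $(B,+)$ is characteristically simple, and the elimination of abelian $S$ via the $p$-group fixed-point count are all sound (the last is arguably cleaner than the paper's argument, which conjugates $\lambda(B)$ into the unitriangular Sylow $p$-subgroup of $\GL(n,p)$ and exhibits an invariant line), and your further observations --- transitivity of the permutation of the simple factors, and $\Fit(B,\cdot)=1$ via $\mathrm{O}_p(B,\cdot)$ --- are valid. But the proposal stops exactly where the theorem's real content begins: you do not exclude the case $(B,+)\cong S^n$ with $S$ non-abelian simple, and you say explicitly that you expect this to need the structure theory of simple groups. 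That case is the heart of the proof, and nothing in your argument touches it, so this is a genuine gap rather than a routine omission.

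For comparison, here is how the paper closes that case. Since $\Ker\lambda=0$, the group $H=\lambda(B)\leq\Aut(B,+)\cong\Aut(S)\wr\Sigma_n$ has order $|S|^n$. The pivotal step is the dichotomy of its Step 3: the preimage of $H\cap\Int(B,+)$ under $x\mapsto\alpha_x$ (injective because $\Z(B,+)=1$) is a $\lambda$-invariant subgroup of $(B,+)$, hence a subbrace, so either $H\cap\Int(B,+)=0$ or $H=\Int(B,+)$. The first branch is destroyed by arithmetic: $H$ would embed in $\Out(B,+)\cong\Out(S)\wr\Sigma_n$, so $|S|^n$ would divide $|\Out(S)|^n\, n!$, which is impossible by the bound $|\Out(S)|<|S|$ of Corollary~\ref{cor:outs<s} (a deep, classification-dependent input): choosing $p^\alpha\mid|S|$ with $p^\alpha\nmid|\Out(S)|$ forces $p^n\mid n!$, a contradiction. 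Hence $H=\Int(B,+)\cong S^n$, and applying the same invariance argument to a normal factor $N\cong S$ of $H$ forces $n=1$. Now $(B,+)\cong(B,\cdot)\cong S$, and Lemma~\ref{lema:brides2simples} --- whose proof rests on Lemma~\ref{lema:subgrups-GIntG}, which classifies the regular subgroups of $[G]\Int(G)$ isomorphic to $G$ by producing, from any other candidate, a fixed-point-free automorphism of $G$ and invoking Theorem~\ref{teo:fixpoint-autsimple} (a group admitting such an automorphism is soluble) --- shows that $B$ is almost trivial. But in an almost trivial brace $ab=b+a$, so every additive subgroup is a subbrace; thus $(B,+)$ would have no proper subgroups, contradicting that $S$ is non-abelian. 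So your closing intuition is accurate: the missing case is settled by exactly two deep inputs, the outer-automorphism bound and the fixed-point-free-automorphism theorem, and your Fitting-subgroup reduction, while correct, does not substitute for either.
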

  
As the algebraic study of skew left braces involves the interaction of two group structures, notions as nilpotency and solubility instinctively emerge and determine interesting properties of solutions. In particular, nilpotency of skew left braces is introduced to deal with multipermutational solutions, or  solutions that can be retracted into the trivial solution over a singleton after finitely many identification steps (see \cite{CedoSmoktunowiczVendramin19, Gateva-IvanovaCameron12, JespersVanAntwerpenVendramin23}, for example). Another interesting property of solutions of the YBE closely related to multipermutability is decomposability, i.e. solutions that can  be decomposed in a disjoint union of two proper solutions (see~\cite{EtingofSchedlerSoloviev99}). The importance of such properties relies on the search of classes of solutions which can be obtained from solutions of smaller cardinality. This opens the door to bring in  simplicity of solutions (see \cite{CedoOkninski21, Vendramin16}): a solution $(X,r)$ is said to be \emph{simple} if it does not admit any non-trivial epimorphism of solutions $(X,r) \rightarrow (Y,s)$. It  follows that simple solutions are indecomposable (whenever $|X|\neq 2$), and for finite involutive solutions ($r^2 = \text{id}$), every finite  indecomposable involutive solution can be obtained as a dynamical extension of a finite simple solution (see~\cite{CastelliCatinoPinto19} and~\cite{Vendramin16}). Therefore, classifying simple solutions appears to be essential to the major problem of classifying all solutions.

It is well-known that finite minimal simple groups, or non-abelian groups  whose all proper subgroups are soluble, played a key role in the classification of all finite simple groups, which in turn are essential to build finite groups using the extension theory. Soluble groups, which are in the antipodes of the simple groups since they possess a rich normal structure, have a lot to say in this context.

The introduction of a good definition of solubility of skew left braces should provide a good framework for studying minimal simplicity of solutions and should give some hints to classify simple skew left braces. As in the group theory case, it is absolutely necessary that soluble skew left braces have a rich ideal structure. It is also necessary that soluble skew left braces are to skew left braces what soluble groups are to groups.   

Soluble  left braces were defined previously in \cite{BachillerCedoJespersOkninski19}: a left brace $B$ is defined to be soluble if the series $\{B_i\}_{i\in \mathbb{N}}$ trivialises at some $n\in \mathbb{N}$. Here, $B_1 := B$ and $B_{n+1} := B_n \ast B_n$ for every $n \geq 1$. Later on, in~\cite{KonovalovSmoktunowiczVendramin21-erratum, KonovalovSmoktunowiczVendramin21} this definition was extended for general skew left braces.
According to this generalisation, every trivial skew left brace $B$ would be soluble, as $B \ast B = 0$; or equivalently, every group regarded as a trivial brace would be soluble.

A possible solution to overcome such pathology could be obtained by means of abelian subideal series: we say that a skew left brace $B$ is \emph{weakly soluble} if the series $\{B_n\}_{n\in \mathbb{N}}$ reaches $0$ at some $n_0\in \mathbb{N}$, and $B_n/B_{n+1}$ is trivial with an abelian group structure for every $n\in \mathbb{N}$. Unfortunately, this notion does not seem good to get our purposes: there are weakly soluble skew left braces with exactly three ideals and the unique minimal ideal is not an elementary abelian brace (see Section~\ref{sec:exemple}).

Our definition of solubility in  Section~\ref{sec:sol} is inspired on concepts as central nilpotency and commutator ideal which have been recently introduced following ideas of universal algebra  (see  \cite{BonattoJedlicka23, JespersVanAntwerpenVendramin23} and  \cite{BournFacchiniPompili23}, respectively). It aims to provide a representative class of skew left braces with a rich ideal structure. 

Our first goal here is to describe the ideal structure of soluble skew left braces. When studying the ideal structure of a skew left brace, chief series introduced in~\cite{BallesterEstebanPerezC24-JH} turn out to play a key role. We prove:

\begin{athm}
\label{teo:soluble-chief^maximal}
Let $B$ be a soluble skew left brace with a chief series. Then, each chief factor of $B$ is abelian and it is either a Frattini chief factor or a complemented chief factor. Moreover, if $B$ is finite, then
\begin{enumerate}
\item every chief factor is isomorphic to an elementary abelian $p$-group for some prime $p$;
\item each maximal sub skew brace of $B$ has prime power index as a subgroup of~$B$.
\end{enumerate}
\end{athm}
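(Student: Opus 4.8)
The plan is to reduce every assertion to a statement about a single chief factor $H/K$, which I regard as a $B$-simple skew left brace: by definition of a chief series, no ideal of $B$ lies strictly between $K$ and $H$, so $H/K$ has no proper nonzero $B$-invariant ideal. To prove that $H/K$ is abelian I would first observe that solubility passes to subquotients — this should be immediate from the definition of the derived series built out of the brace commutator — so that $H/K$ is itself soluble. Then I consider the derived ideal $[H/K,H/K]$. Being generated by commutators, it is a characteristic ideal of $H/K$ and hence, since $H/K$ is an ideal of $B/K$, it is invariant under the action of $B$, i.e. an ideal of $B/K$ contained in $H/K$. By $B$-simplicity it equals either $0$ or $H/K$; if it were $H/K$ the whole derived series would be stationary at a nonzero value, contradicting solubility. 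Therefore $[H/K,H/K]=0$ and $H/K$ is abelian.

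For the Frattini-or-complemented dichotomy I pass to $B/K$ and assume $K=0$, so that $H$ is a minimal ideal. If $H\subseteq\Frat(B)$ then $H/K$ is a Frattini chief factor and there is nothing to prove. Otherwise there is a maximal sub skew brace $M$ of $B$ with $H\not\subseteq M$, and maximality forces the sub skew brace generated by $H$ and $M$ to be all of $B$. The crucial step is to show that $H\cap M$ is an ideal of $B$: it is plainly stable under $M$, and here I use the abelianness of $H$ just established to obtain stability under $H$ and under the relevant $\lambda$-action, so that $H\cap M$ is normalised with respect to both the additive and the multiplicative structures and is $\lambda$-invariant. Minimality of $H$ then gives $H\cap M=0$, exhibiting $M$ as a complement and showing that $H/K$ is a complemented chief factor. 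I expect this verification that $H\cap M$ is a genuine ideal — reconciling additive normality, multiplicative normality and $\lambda$-invariance simultaneously — to be the main obstacle, since the two operations do not commute and the classical group-theoretic normality argument does not transfer verbatim.

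For the finite refinements I again work with a chief factor $H/K$, now known to be abelian and $B$-simple. For part~(1) I consider characteristic additive subgroups such as $\{x : px=0\}$ and $p\,(H/K)$ for a prime $p$ dividing $\lvert H/K\rvert$; because $H/K$ is abelian these are $B$-invariant ideals, so $B$-simplicity forces the additive group to be elementary abelian, and the abelian brace structure then yields that $H/K$ is isomorphic, as a skew left brace, to an elementary abelian $p$-group. For part~(2) I argue by induction on $\lvert B\rvert$. Let $M$ be a maximal sub skew brace and let $N$ be a minimal ideal of $B$; by part~(1) applied to the chief factor $N/0$, the order of $N$ is a power of a prime $p$. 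If $N\subseteq M$ I pass to $B/N$, where $M/N$ is maximal of the same index, and conclude by induction. If $N\not\subseteq M$, maximality gives $B=NM$, and counting in either group structure yields $[B:M]=[N:N\cap M]$, a power of $p$, which is the prime power index claimed. The only care needed here is that the product $NM$ of the ideal $N$ with the sub skew brace $M$ is again a sub skew brace and that the index is computed consistently in both group structures, which holds precisely because $N$ is an ideal.
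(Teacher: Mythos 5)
Your proposal follows the paper's own route almost step for step: the reduction to a minimal ideal; the dichotomy obtained from a maximal subbrace $M$ with $B=HM=H+M$ together with the verification that $H\cap M$ is an ideal of $B$ (this is precisely the paper's Lemma~\ref{lema:max<->factppal}, and the computation succeeds with exactly the ingredients you name: triviality of $H$ gives $\lambda_{sy}(x)=\lambda_s(x)$, and abelianness plus $B=H+M$ gives additive normality); the characteristic-subgroup argument for statement~(1), which is sound at that point because the chief factor is by then known to be a \emph{trivial} brace, so additive conjugation, multiplicative conjugation and every $\lambda_b$ all restrict to automorphisms of the single underlying abelian group; and an induction on $|B|$ for statement~(2) that is a mild repackaging of the paper's chief-series argument --- your case $N\not\subseteq M$ even avoids a second appeal to the key lemma by pure counting, which is a small simplification.

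The genuine gap is in your first step, the abelianness of a chief factor. You claim that $[H/K,H/K]$, ``being generated by commutators, is a characteristic ideal of $H/K$ and hence invariant under the action of $B$''. That inference is group theory, and it does not transfer to skew braces: $B$ does not act on an ideal $I$ by brace automorphisms. For $b\in B$, the map $\lambda_b|_I$ is an automorphism of $(I,+)$ but in general does not respect the multiplication of $I$, and conjugation in $(B,\cdot)$ in general does not respect the addition; hence a subset of $I$ invariant under all brace automorphisms of $I$ --- such as the ideal of $I$ generated by $[I,I]_+$, $[I,I]_\cdot$ and the elements $xy-(x+y)$ --- has no a priori reason to be $\lambda$-invariant or normal in $(B,\cdot)$. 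This is exactly the pitfall you correctly flag for the $H\cap M$ step; there you can neutralise it because abelianness of $H$ is already available, but here abelianness is the very thing being proved, so the shortcut is unavailable. The paper avoids the problem by construction: its commutator $[I,J]$ of ideals of $B$ is \emph{defined} as the smallest ideal of the ambient brace $B$ containing those generating sets, so $B$-invariance costs nothing. If you adopt that definition, your dichotomy ``$[H/K,H/K]=0$ or $H/K$'' is immediate from $B$-simplicity, but the second horn then needs a different contradiction: not stationarity of the derived series of $H/K$ itself (its intrinsic derived ideal may be strictly smaller than the ambient commutator), but monotonicity of the ambient commutator, which gives $H/K\subseteq\partial_j(B/K)$ for every $j$ and thereby contradicts Proposition~\ref{prop:abseries-derseries}, i.e.\ the fact that the derived series of the soluble brace $B/K$ reaches~$0$.
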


This theorem confirms that the rich ideal structure of soluble skew left braces allows to obtain brace-theoretical versions of some important  classical results of soluble groups. Theorem~B does not hold for weakly soluble skew left braces (see Section~\ref{sec:exemple}).

Our next goal is to introduce soluble solutions as an antithesis of simple solutions. Recall that a homomorphism $f \colon (X,r) \rightarrow (Y,s)$ between solutions is a map $f\colon X \rightarrow Y$ satisfying $s(f\times f) = (f\times f)r$. Assume that $f\colon (X,r) \rightarrow (Y,r)$ is a homomorphism of solutions with $(Y,s)$ a trivial solution. Then, $f$ induces an equivalence relation on $X$: $x \Ker\! f\,\, y$ if, and only if, $f(x) = f(y)$, for every $x,y\in X$. If $X_1 = [x]_{\Ker f} \in X/\Ker f$ is the equivalence class of some $x \in X$, then it follows $X_1 \times X_1$ is $r$-invariant.

\begin{definicio}
Let $(X,r)$ be a solution. Assume that there exists a sequence of subsets $X_{t} \subseteq \ldots \subseteq X_1 \subseteq X_0 = X$ with $X_{t} = \{x_{t}\}$ such that, for every $1\leq i \leq t$, there exist a solution $(Y_i,s_i)$ and an epimorphism of solutions  $f_i \colon (X,r) \rightarrow (Y_i,s_i)$ satisfying
\begin{itemize}
\item $X_i \in X/\Ker f_i$, $1\leq i \leq t$;
\item $f_i(X_{i-1}, r|_{X_{i-1}\times X_{i-1}})$ is a trivial subsolution of $(Y_i, s_i)$, $1\leq i \leq t$.
\end{itemize}
Then, $(X,r)$ is said to be \emph{soluble at $x_{t}$}.
\end{definicio} 

%
%
%
%

We show that solubility of skew left braces characterises soluble solutions.

\begin{athm}
\label{teo:soluble_brace<->soluble_sol}
Let $B$ be a skew left brace. Then,  $B$ is soluble if, and only if, its associated solution $(B,r_B)$ is soluble at $0$.
\end{athm}

\begin{athm}
\label{teo:struct_brace_soluble->soluble}
Let $(X,r)$ be a solution such that $G(X,r)$ is a soluble skew left brace. Assume that $G(X,r)$ has an abelian descending series 
\[ G(X,r) = I_0 \supseteq I_1 \supseteq \ldots \supseteq I_n = 0\]
such that $X \cap I_{n-1}$ is not empty. Then $(X,r)$ is soluble at $x$, for every $x \in X \cap I_{n-1}$.
\end{athm}

Paying heed to the important role of central nilpotency in multipermutation solutions (see \cite{JespersVanAntwerpenVendramin23}), a natural question is to consider whether multipermutability of solutions is a stronger property than solubility of solutions. In a forthcoming paper, we will present results confirming that central nilpotency is to brace solubility what group nilpotency is to group solubility. In this context, the following consequences of Theorems~\ref{teo:soluble_brace<->soluble_sol} and~\ref{teo:struct_brace_soluble->soluble} are illustrative results which foresee such relation.

\begin{corolari}
\label{cor:sol_brid_multip->sol_brid_sol}
Let $B$ be a brace such that $(B,r_B)$ is of finite multipermutational level. Then, $(B,r_B)$ is a soluble solution at $0$.
\end{corolari}

\begin{corolari}
\label{cor:sol_mult->sol_sol}
Let $(X,r)$ be a multipermutation solution. If $\Soc(G(X,r)) \cap X$ is not an empty set, then $(X,r)$ is a soluble solution at $x$, for every $x\in \Soc(G(X,r)) \cap X$.
\end{corolari}

The following example shows that condition $\Soc(G(X,r)) \cap X \neq \emptyset$ in Corollary~\ref{cor:sol_mult->sol_sol} is essential, as some multipermutation solutions can be simple.
\begin{exemple}
Consider the set $X = \{1,2,3\}$. We define a Lyubashenko solution, given by $r(x,y) = (\sigma(y), \tau(x))$, where $\sigma = (1,2,3), \tau = (1,3,2) \in \Sigma_3$. Thus, $\lambda_x = \sigma$ and $\rho_x = \tau$, for every $x \in X$. Then, $(X,r)$ has multipermutational level $1$, as $\Ret(X,r)$ has cardinality $1$. 

On the other hand, assume that $f \colon (X,r) \rightarrow (Y,s)$ is an epimorphism with $Y = \{y_1,y_2\}$ and $s(y_i,y_j) = (\lambda'_{y_i}(y_j),\rho'_{y_j}(y_i))$, for every $1\leq i,j\leq 2$. Without loss of generality, suppose that $f(1) = f(2) = y_1$ and $f(3) = y_2$. Then, 
\[ \begin{array}{l}
 y_1 = f(2) = f(\lambda_1(1)) = \lambda'_{y_1}(y_1) \\
 y_1 = f(1) = f(\lambda_1(3)) = \lambda'_{y_1}(y_2)
 \end{array}\]
Therefore, $\lambda'_{y_1}$ is not bijective and so, $(Y,s)$ is not a non-degenerate solution.

Hence, for every epimorphism $f\colon (X,r) \rightarrow (Y,s)$, either $f$ is an isomorphism or $|Y| = 1$; that is, $(X,r)$ is a simple solution.

\end{exemple}

In Section~\ref{sec:exemple}, we present a worked example suggesting that weak solubility is not the best candidate either to play the role of antithesis to simplicity or to have a good relationship to central nilpotency. It also emphasises the importance of having soluble braces as a class of skew left braces with a rich ideal structure. 

Furthermore, it is shown in \cite[Corollary~4.5]{JespersVanAntwerpenVendramin23} that if $B$ is a centrally nilpotent brace, then every maximal subbrace is an ideal. This is a brace analogue of the group theory known result asserting that in nilpotent groups every maximal subgroup is normal. Note that the converse is also true for finite groups, that is, a finite group is nilpotent if, and only if, every maximal subgroup is normal. 

Our example shows that an analogue of this result in the context of skew left braces does not hold: there exist skew left braces $B$ which are not centrally nilpotent but every maximal subbrace of $B$ is an ideal.


\section{Preliminaries}
\label{sec:prelim}
In this section, we fix notation and introduce basic results that are needed in the paper.  

\emph{From now on, the word brace will mean skew left brace. }

It is well known that the identity elements of the additive group $(B,+)$ and multiplicative group $(B,\cdot)$ of a brace $B$ coincide, and it is denoted by $0$. The product of two elements of a brace will be denoted by juxtaposition. 

Given two subsets $X,Y \subseteq B$, we denote by $[X,Y]_+$ and $[X,Y]_{\bigcdot}$ the commutator of $X$ and $Y$ in $(B,+)$ and $(B,\cdot)$, respectively.

A \emph{subbrace} $S$ of $B$ is a subgroup of the additive group which is also a subgroup of the multiplicative group. A \emph{homomorphism} between two braces $A$ and $B$ is a map $f\colon A \rightarrow B$ satisfying that $f(a + b) = f(a) + f(b)$ and $f(ab) = f(a)f(b)$ for all  $a,b\in A$. The kernel of $f$ is defined as the set $\Ker(f) = \{a \in A \, |\,  f(a) = 0\}$. 
If $f$ is bijective, $f$ is called an isomorphism. We shall say that  braces are \emph{isomorphic}, if there is an isomorphism between them.

Recall that the multiplicative group $(B,\cdot)$ acts on the additive group $(B,+)$ via automorphisms: for every $a\in B$, the map $\lambda_a \colon B \rightarrow B$, given by $\lambda_a(b) = -a + ab$, is an automorphism of $(B,+)$ and the map $\lambda\colon (B,\cdot) \rightarrow \Aut(B,+)$ which sends $a \mapsto \lambda_a$ is a group homomorphism (see \cite[Proposition 1.9]{GuarnieriVendramin17}). This group action relates the two operations on a  brace. For every $a,b\in B$, it holds
\begin{equation}
\label{eq:producte-lambda} a b = a+ \lambda_a(b) \quad \text{and}\quad a+b = a\lambda_{a^{-1}}(b).
\end{equation}
Note that $\Ker \lambda = \{a \in B: ab = a+b,\  \forall\, b \in B\}$ is  a subbrace of $B$ by~\eqref{eq:producte-lambda}.

Moreover, it also provides a characterisation of braces by means of regular subgroups of the holomorph of a group. Given a group $G$, the \emph{holomorph} of $G$ is the semidirect product $\Hol(G) = [G]\Aut(G)$ with the operation given by $(g,\varphi)(h,\psi) = (g\varphi(h),\varphi\psi)$, for every $g,h\in G$, $\varphi,\psi\in \Aut(G)$. A \emph{regular subgroup} of the holomorph $H \leq \Hol(G)$ is a subgroup such that for every $g \in G$, there exists a unique $\phi_g\in \Aut(G)$ such that $(g,\phi_g)\in H$. If $g \in G$, the map $^gx = \alpha_g (x) = gxg^{-1}$ is an automorphism of $G$ called the \emph{inner automorphism associated with $g$}. The set $\Int(G)$ of all inner automorphisms of $G$ is a normal subgroup of $\Aut(G)$. In particular, $[G]\Int(G) \leq \Hol(G)$.

\begin{proposicio}[{\cite[Theorem 4.2]{GuarnieriVendramin17}}]
\label{prop:brida-holomorph}
If  $(B,+,\cdot)$ is a brace, then $H_B = \{(b, \lambda_b): b\in B\}$ is a regular subgroup of $\Hol(B,+)$ isomorphic to~$(B,\cdot)$.

Conversely, suppose that for a group $(B,+)$, we have a regular subgroup $H\leq \Hol(B,+)$. Then we can define on $B$ a binary operation $bc := b + \phi_b(c)$, being $(b,\phi_b)\in H$,  such that $(B,+,\cdot)$ becomes a brace and $(B,\cdot)$  is isomorphic to~$H$.
\end{proposicio}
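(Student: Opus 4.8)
The plan is to prove both implications by exhibiting, in each case, the natural bijection between $B$ and the candidate subgroup of $\Hol(B,+)$ and checking that it intertwines the multiplication $\cdot$ with the product of the holomorph; the identity $ab = a + \lambda_a(b)$ from~\eqref{eq:producte-lambda} is the bridge that makes the verification essentially mechanical.

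For the direct implication, I would define $\Phi\colon (B,\cdot)\to \Hol(B,+)$ by $\Phi(b) = (b,\lambda_b)$ and verify that it is an injective group homomorphism. Homomorphy is a one-line computation: $\Phi(a)\Phi(b) = (a,\lambda_a)(b,\lambda_b) = (a+\lambda_a(b),\,\lambda_a\lambda_b)$, which equals $(ab,\lambda_{ab}) = \Phi(ab)$ because $\lambda$ is a homomorphism and $a+\lambda_a(b)=ab$ by~\eqref{eq:producte-lambda}. Injectivity is immediate from the first coordinate. Hence $H_B = \Phi(B)$ is a subgroup isomorphic to $(B,\cdot)$, and it is regular because the first-coordinate projection $H_B \to B$, $(b,\lambda_b)\mapsto b$, is a bijection, so each $g\in B$ admits the unique companion automorphism $\lambda_g$.

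For the converse, regularity of $H$ furnishes, for each $b\in B$, the unique $\phi_b\in\Aut(B,+)$ with $(b,\phi_b)\in H$, and I set $bc := b+\phi_b(c)$. The key object is the bijection $\Psi\colon B\to H$, $b\mapsto(b,\phi_b)$. Computing inside $H$, we get $(b,\phi_b)(c,\phi_c) = (b+\phi_b(c),\,\phi_b\phi_c)$; since $H$ is closed this element lies in $H$, so by uniqueness of companions it must be $(bc,\phi_{bc})$ with $\phi_{bc}=\phi_b\phi_c$. Thus $\Psi(b)\Psi(c)=\Psi(bc)$, and since $\Psi$ is a bijection it transports the group structure of $H$: $(B,\cdot)$ is a group isomorphic to $H$, with identity $0$ (the preimage of $(0,\operatorname{id})$) and with $b\mapsto\phi_b$ a homomorphism $(B,\cdot)\to\Aut(B,+)$.

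It remains to check the brace distributivity. Using that $\phi_a$ is an additive automorphism, $a(b+c) = a+\phi_a(b+c) = a+\phi_a(b)+\phi_a(c)$, while $ab-a+ac = (a+\phi_a(b))-a+(a+\phi_a(c)) = a+\phi_a(b)+\phi_a(c)$ after the cancellation $-a+a=0$; the two expressions agree. The only point demanding care---and the place where I expect the subtlety to lie---is that $(B,+)$ need not be abelian, so the order of the summands must be respected throughout, and the identity hinges precisely on the sandwiched $-a$ collapsing against the $a$ that opens the factor $ac$. Everything else is bookkeeping, so I do not anticipate a genuine obstacle beyond keeping the skew (non-commutative) additive arithmetic honest.
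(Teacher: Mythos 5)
Your proof is correct. Note that the paper itself offers no proof of this proposition---it is quoted directly from \cite[Theorem 4.2]{GuarnieriVendramin17}---and your argument (transport of structure along the bijection $b \mapsto (b,\lambda_b)$, resp.\ $b \mapsto (b,\phi_b)$, with the skew distributivity checked by the cancellation $-a+a=0$) is essentially the standard one given in that reference, so there is nothing to object to.
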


A brace $B$ is said to be \emph{almost trivial} if $ab = b + a$ for every $a,b\in B$. 

\begin{proposicio}
\label{prop:caract-trivials}
Let $(B,+,\cdot)$ be a brace. Then
\begin{enumerate}
\item $B$ is trivial if, and only if, $\Ker \lambda = (B,\cdot)$ for every $b\in B$ or, equivalently, $H_B = \{(b, 1): b\in B\} \leq \Hol(B,+)$. 
\item $B$ is almost trivial if, and only if, $H_B = \{(b, \alpha_{-b}): b\in B\}$. In this case, $\Int(B,+)$ is a homomorphic image of $H_B$.
\end{enumerate}
\end{proposicio}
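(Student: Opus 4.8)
The plan is to prove both statements by directly unwinding the definitions of triviality and almost triviality through the fundamental relation $ab = a + \lambda_a(b)$ from~\eqref{eq:producte-lambda}, together with the additive cancellation available in $(B,+)$. Each biconditional amounts to translating a condition on the product $\cdot$ into a condition on the maps $\lambda_b$, and hence on the second coordinates of the elements of $H_B$.

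For part~1, I would start from the definition that $B$ is trivial precisely when $ab = a+b$ for all $a,b \in B$. Substituting $ab = a + \lambda_a(b)$ and cancelling $a$ on the left in $(B,+)$, this is equivalent to $\lambda_a(b) = b$ for all $a,b$, i.e.\ $\lambda_a = \operatorname{id}$ for every $a$. By the description $\Ker\lambda = \{a : ab = a+b \ \forall b\}$, the assertion that every $a$ lies in $\Ker\lambda$ is the same as $\Ker\lambda = B$; and $\lambda_a = \operatorname{id}$ for all $a$ is exactly the statement that $H_B = \{(b,\lambda_b)\} = \{(b,1) : b \in B\}$. This chain of equivalences settles the first item.

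For part~2, recall that in the additive group the inner automorphism associated with $-b$ is $\alpha_{-b}(c) = -b + c + b$. I would compute $\lambda_b(c) = -b + bc$ and compare: almost triviality $bc = c + b$ gives $\lambda_b(c) = -b + (c+b) = \alpha_{-b}(c)$, so $\lambda_b = \alpha_{-b}$ for every $b$, that is, $H_B = \{(b,\alpha_{-b}) : b \in B\}$. Conversely, if $\lambda_b = \alpha_{-b}$ for all $b$, then $-b + bc = -b + c + b$, and cancelling $-b$ on the left yields $bc = c + b$, which is almost triviality. Finally, for the homomorphic-image claim I would restrict the projection $\pi\colon \Hol(B,+) \to \Aut(B,+)$, $(g,\varphi) \mapsto \varphi$ (a homomorphism, since the second coordinate of a product in the holomorph is the product of the second coordinates) to $H_B$. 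Its image on $H_B = \{(b,\alpha_{-b})\}$ is $\{\alpha_{-b} : b \in B\}$, which equals $\Int(B,+)$ because $b \mapsto -b$ is a bijection of $B$ and $\{\alpha_g : g \in B\} = \Int(B,+)$ by definition. Since $H_B \cong (B,\cdot)$, this exhibits $\Int(B,+)$ as a homomorphic image of $H_B$.

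The computations are entirely routine, so I do not expect a serious obstacle; the only point requiring slight care is the last sentence of part~2, where one must correctly identify the restricted projection as a group homomorphism and verify that its image is the whole of $\Int(B,+)$ rather than a proper subgroup --- which follows once one notes that $-b$ ranges over all of $B$.
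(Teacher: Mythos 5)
Your proof is correct and follows essentially the same route as the paper: both unwind the relation $ab = a + \lambda_a(b)$ to identify triviality with $\lambda_a = \operatorname{id}$ and almost triviality with $\lambda_a = \alpha_{-a}$, and both realise $\Int(B,+)$ as the image of $H_B$ under the second-coordinate projection. The only differences are cosmetic: you spell out part~1 (which the paper dismisses as immediate) and invoke the general fact that the projection $\Hol(B,+)\to\Aut(B,+)$ is a homomorphism together with an explicit surjectivity remark, where the paper instead verifies the product formula $(b,\alpha_{-b})(a,\alpha_{-a}) = (a+b,\alpha_{-(a+b)})$ by hand inside $H_B$.
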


\begin{proof}
Only the second statement is in doubt. Note that, for all $a,b\in B$, $ab = a + \lambda_a(b) =  b+a$ if, and only if, $\lambda_a(b) = -a+b+a$ or, equivalently, $\lambda_a = \alpha_{-a}$, for every $a\in B$. Moreover, the projection $(b, \alpha_{-b}) \mapsto \alpha_{-b}$ over the second component provides a homomorphism between $H_B$ and $\Int(B,+)$, as 
\[ (b, \alpha_{-b})(a,\alpha_{-a}) = (b -b + a +b, \alpha_{-b}\alpha_{-a}) = (a+b,\alpha_{-(a+b)}).\qedhere\]
\end{proof}

We see in the following how braces can be defined from bijective $1$-cocycles associated with actions of groups. Let $(C,\cdot)$ and $(B,+)$ be groups such that $C$ acts on $B$ by means of a group homomorphism $\lambda \colon C \rightarrow \Aut(B,+)$, $c\mapsto \lambda_c$. A bijective map $\delta \colon C \rightarrow A$ is said to be a \emph{bijective $1$-cocycle} associated with $\lambda$, if $\delta(cd) = \delta(c) + \lambda_c(\delta(d))$, for every $c,d\in C$. In the previous situation, $(B,+)$ admits a brace structure by means of $ab := \delta(\delta^{-1}(a)\delta^{-1}(b))$, for every $a,b\in B$ (see \cite[Proposition 1.11]{GuarnieriVendramin17}), for example).

Following \cite{BallesterEsteban22}, bijective $1$-cocycles can be constructed by means of trifactorised groups. Assume that a group $(C,\cdot)$ acts on a group $(B,+)$ by means of a group homomorphism $\lambda\colon C \rightarrow \Aut(B,+)$. Take $G = [B]C$ the semidirect product associated with $\lambda$. If $D$ is a subgroup of $G$ such that $G = BC = BD = DC$ and $B \cap D = D\cap C = 1$, then $G$ is said to be a \emph{trifactorised group}  and there exists a bijective $1$-cocycle $\delta \colon C \rightarrow B$, given by $D = \{(\delta(c),c): c\in C\}$.


A non-empty subset $I$ of a brace $B$ is a \emph{left  ideal}, if $(I,+)$ is a subgroup of $(B,+)$ and $B \ast I \subseteq I$, or equivalently $\lambda_b(I) \subseteq I$, for every $b\in B$. A left ideal $I$  is an \emph{ideal} if $(I,+)$ is a normal subgroup of $(B,+)$ and $(I,\cdot)$ is a normal subgroup of $(B,\cdot)$. By  \cite[Lemma~1.9]{CedoSmoktunowiczVendramin19}, a left ideal $I$ is an ideal of $B$ if, and only if, $(I,+)$ is a normal subgroup of $(B,+)$ and $I\ast B \subseteq I$.

Ideals of skew left braces can be considered as true analogues of normal subgroups in groups and ideals in rings (see \cite[Lemma 2.3]{GuarnieriVendramin17}, for example). In particular, kernels of homomorphisms of braces are ideals.

\begin{proposicio}
\label{prop:product_id-subbr}
Let $B$ be a brace and let $S$ and $I$ be, respectively, a subbrace and an ideal of $B$. Then $SI = S+I$ is a subbrace of~$B$.
\end{proposicio}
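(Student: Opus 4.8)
The plan is to prove the set equality $SI = S+I$ first, and then observe that this common set is simultaneously a subgroup of $(B,\cdot)$ and of $(B,+)$, which is precisely what it means to be a subbrace. Here $SI$ denotes the product $\{s\cdot i : s\in S,\ i\in I\}$ in the multiplicative group and $S+I$ the analogous product in the additive group.

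For the set equality, I would use the two conversion formulas in~\eqref{eq:producte-lambda} together with the defining property of a left ideal, namely $\lambda_b(I)\subseteq I$ for every $b\in B$. Given $s\in S$ and $i\in I$, the first formula gives $s i = s + \lambda_s(i)$; since $I$ is a left ideal, $\lambda_s(i)\in I$, whence $si\in S+I$ and so $SI\subseteq S+I$. Conversely, the second formula gives $s + i = s\,\lambda_{s^{-1}}(i)$; again $\lambda_{s^{-1}}(i)\in I$, so $s+i\in SI$ and $S+I\subseteq SI$. This establishes $SI = S+I$ as subsets of $B$.

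To finish, I would invoke the full ideal hypothesis on each side separately. Since $I$ is an ideal, $(I,\cdot)$ is normal in $(B,\cdot)$; as $(S,\cdot)$ is a subgroup, the product $SI$ of a subgroup with a normal subgroup is a subgroup of $(B,\cdot)$. Symmetrically, $(I,+)$ is normal in $(B,+)$ and $(S,+)$ is a subgroup, so $S+I$ is a subgroup of $(B,+)$. Because $SI$ and $S+I$ coincide as sets, this one set is a subgroup for both operations, i.e.\ a subbrace of $B$.

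The only delicate point, and the step I expect to carry the weight of the argument, is the set equality $SI = S+I$: it rests on the left-ideal condition that the maps $\lambda_s$ and $\lambda_{s^{-1}}$ leave $I$ invariant, which is exactly what lets~\eqref{eq:producte-lambda} translate multiplicative products into additive ones and back. The normality clauses in the definition of an ideal are then needed only to upgrade each of the two descriptions to a genuine subgroup. Everything else is routine group-theoretic bookkeeping, so I would not anticipate any serious obstacle.
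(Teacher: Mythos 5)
Your proof is correct and takes essentially the same route as the paper's: both use the conversion formulas~\eqref{eq:producte-lambda} together with the $\lambda$-invariance of $I$ to obtain the set equality $SI = S+I$, and then conclude that this common set is a subgroup for each operation because it is the product of a subgroup with a normal subgroup. The only difference is that you spell out this last subgroup-theoretic step, which the paper leaves implicit.
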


\begin{proof}
Let $x \in S$ and $y\in I$. Then, by~\eqref{eq:producte-lambda}, $xy = x + \lambda_x(y) \in S+I$ and $x + y = x\lambda_x^{-1}(y) \in SI$. Thus $IS = SI = S+I = I+S$ is a subbrace of~$B$.
\end{proof}

\begin{definicio}
Let $B$ be a brace and $S$ be a subbrace of $B$. We say that $S$ is \emph{complemented} in $B$ if there exists a subbrace $T$ such that $B = ST = S+T$ and $S \cap T = 0$. 
\end{definicio}

The following are particular examples of, respectively, of a left ideal and an ideal of a brace such that play a central role in the study of braces:
\begin{align*}
\Fix(B) & = \{a \in B\,|\, \lambda_b(a) = a, \ \text{for every $b\in B$}\}\\
\Soc(B) & = \Ker\lambda \cap \Z(B,+) = \{a\in B\,|\, ab = a+b = b+a, \ \text{for every $b\in B$}\}.
\end{align*}

Every detailed study of an algebraic structure depends heavily on describing the behaviour of respectively minimal and maximal substructures, if such exist, and their quotient structure if apply.
\begin{enumerate}
\item Let $I$ be an ideal of a brace $B$. $I$ is called a \emph{minimal ideal} of $B$ if $I \neq 0$ and $0$ and $I$ are just the ideals of $B$ contained in $I$. On the other hand, $I$ is called a \emph{maximal ideal} of $B$ if $I$ is the only proper ideal of $B$ containing~$I$.
\item Let $S$ be a subbrace of a brace $B$. $S$ is called a \emph{maximal subbrace} of $B$ if $S$ is the only proper subbrace of $B$ containing~$S$.
\end{enumerate}

\section{Braces without proper subbraces}
\label{sec:without_sub}
\emph{In this section every group is finite.}

We work toward a proof of Theorem~\ref{teo:brides-sensesub}. The proof of this result depends heavily on an exhaustive study of automorphisms of simple and characteristically simple groups. 

Recall that a characteristically simple group $G = S^n$ is isomorphic to a direct product of isomorphic copies of a simple group $S$. The next result describes the automorphism group of such groups.

\begin{teorema}
\label{teo:aut-caractsimpl}
Let $G$ be a characteristically simple group with $G = S^n$, $S$ a simple group.
\begin{enumerate}
\item (\cite{Weir55}) If $S = C_p$ is a cyclic group of prime order $p$, then $\Aut(S) \cong \GL(n,p)$. Moreover,  the set of upper triangular matrices with $1$s on the diagonal is a Sylow $p$-subgroup of $\GL(n,p)$.
\item (\cite[Proposition 1.1.20]{BallesterEzquerro06}) If $S$ is not abelian, $\Aut(G) \cong \Aut(S) \wr \Sigma_n$, where $\Sigma_n$ is the symmetric group of degree~$n$.
\end{enumerate}
\end{teorema}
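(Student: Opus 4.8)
The plan is to handle the abelian and non-abelian cases by entirely different structural principles. For part~(1), with $S = C_p$, the group $G = C_p^n$ is elementary abelian, so I would identify it with the vector space $\mathbb{F}_p^n$; since scalar multiplication by an element of $\mathbb{F}_p$ is just repeated addition, every group automorphism is automatically $\mathbb{F}_p$-linear, and conversely every invertible linear map is a group automorphism. Hence $\Aut(G)$ is exactly the group of invertible linear maps, giving $\Aut(G) \cong \GL(n,p)$. For the Sylow claim I would factor
\[ |\GL(n,p)| = \prod_{i=0}^{n-1}(p^n - p^i) = p^{\binom{n}{2}}\prod_{j=1}^{n}(p^j - 1), \]
so that the $p$-part of $|\GL(n,p)|$ equals $p^{\binom{n}{2}}$. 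The set $U$ of upper triangular matrices with $1$s on the diagonal is a subgroup whose order is $p^{\binom{n}{2}}$, one free entry for each of the $\binom{n}{2}$ strictly-upper positions; as this matches the full $p$-part, $U$ is a Sylow $p$-subgroup.

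For part~(2) the decisive structural fact, on which I would concentrate the real work, is that the direct factors $S_i = 1\times\cdots\times S\times\cdots\times 1$ are precisely the minimal normal subgroups of $G$. This uses crucially that $S$ is non-abelian simple, hence centreless and directly indecomposable: one shows that every normal subgroup of $G$ is a subproduct $\prod_{i\in J}S_i$ for some $J\subseteq\{1,\dots,n\}$, ruling out diagonal copies of $S$ by a direct conjugation computation that fails only when $S$ is abelian. The minimal members of this family are exactly the single factors $S_i$. Since any automorphism permutes the minimal normal subgroups, each $\varphi\in\Aut(G)$ determines a permutation $\sigma_\varphi\in\Sigma_n$ with $\varphi(S_i)=S_{\sigma_\varphi(i)}$, and $\varphi\mapsto\sigma_\varphi$ is a homomorphism $\Aut(G)\to\Sigma_n$.

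It then remains to identify the kernel and exhibit a complement. An automorphism in the kernel stabilises each $S_i$ setwise, so restriction yields an element of $\prod_{i}\Aut(S_i)\cong\Aut(S)^n$; conversely, since $G$ is the internal direct product of the $S_i$, any such tuple assembles to an automorphism, so the kernel is isomorphic to $\Aut(S)^n$. The coordinate permutations of $S^n$ are automorphisms realising $\Sigma_n$ as a complement, and their conjugation action on $\Aut(S)^n$ permutes the coordinates; assembling these pieces gives the semidirect product $\Aut(S)^n\rtimes\Sigma_n = \Aut(S)\wr\Sigma_n$. The \emph{main obstacle} is the structural lemma of the previous paragraph: pinning down the minimal normal subgroups as the factors $S_i$ is what makes the wreath product appear, and it genuinely requires the non-abelian simplicity of $S$, which is exactly the hypothesis separating part~(2) from part~(1).
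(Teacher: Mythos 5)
Your proposal is correct, but note that the paper does not prove this theorem at all: both parts are quoted from the literature (part~(1) from Weir, part~(2) from Ballester--Ezquerro, Proposition~1.1.20), so there is no in-paper argument to compare against; what you have written is the standard proof of each cited fact. For (1), identifying $\Aut(C_p^n)$ with $\GL(n,p)$ via $\mathbb{F}_p$-linearity and comparing $|\GL(n,p)| = p^{\binom{n}{2}}\prod_{j=1}^{n}(p^j-1)$ with the order $p^{\binom{n}{2}}$ of the unitriangular subgroup is exactly right (and you correctly read the statement's $\Aut(S)$ as meaning $\Aut(G)$, which is evidently a typo in the paper, since $\Aut(C_p)\cong C_{p-1}$). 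For (2), your route --- every normal subgroup of $S^n$ is a subproduct $\prod_{i\in J}S_i$, hence the $S_i$ are precisely the minimal normal subgroups, hence restriction of the permutation action gives a homomorphism $\Aut(G)\to\Sigma_n$ with kernel $\Aut(S)^n$ and the coordinate permutations as a complement --- is the argument of the cited reference. The only step you leave as a sketch is the subproduct lemma, but the idea you indicate is the right one and completes routinely: given $1\neq x\in N\unlhd G$ with $x_i\neq 1$, centrelessness of $S$ yields $s\in S$ with $[x_i,s]\neq 1$, so $N$ contains a nontrivial element supported on the $i$-th coordinate, and simplicity of $S_i$ together with normality of $N$ forces $S_i\subseteq N$; this is also precisely where abelian $S$ escapes (diagonal subgroups become normal), which justifies the split between the two parts.
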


\begin{nota}
\label{nota:Int&Out-nonsimple}
If $S$ is not abelian and $G = S^n$, then $\Z(G) = 1$ and $\Int(G) \cong G$. Moreover, $\Out(G) = \Aut(G)/\Int(G) \cong \Out(S) \wr \Sigma_n$.
\end{nota}

In \cite[Theorem A]{BallesterEstebanJimenez22-maximalsubgroups} we find  a lower bound for the order of the outer automorphism group of a non-abelian simple group.

\begin{teorema}
\label{teo:cotaout}
Let $S$ be a non-abelian simple group. Denote by $\operatorname{l}(S)$ the smallest index of a core-free subgroup of $S$. Then, $|\Out(S)| \leq 3 \log(\operatorname{l}(S))$. 
\end{teorema}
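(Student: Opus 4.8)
The plan is to establish the bound through the classification of finite simple groups, reducing it to a family-by-family verification in which an upper bound on $|\Out(S)|$ is played against a lower bound on $\operatorname{l}(S)$. First I would record the two ingredients separately. Since $S$ is simple, every proper subgroup is core-free (its core is a proper normal subgroup, hence trivial), so $\operatorname{l}(S)$ is exactly the minimal degree of a faithful transitive permutation representation of $S$; this is a classical invariant whose exact value, or a sharp lower bound, is known for every finite simple group. On the other side, the order and structure of $\Out(S)$ are supplied by the classification: for alternating and sporadic groups $|\Out(S)|\leq 4$, while for a group of Lie type over $\mathbb{F}_q$ with $q=p^f$ one has $\Out(S)=\operatorname{Outdiag}(S)\rtimes(\Phi\times\Gamma)$, where $|\Phi|=f$ is the field-automorphism part and $\Gamma$ is the (very small) graph-automorphism part.

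The alternating and sporadic cases I would dispose of directly. For $A_n$ with $n\geq 5$ we have $|\Out(A_n)|\leq 4$ and $\operatorname{l}(A_n)=n$, so the inequality reads $4\leq 3\log n$, which holds for every $n\geq 5$; the single genuine exception $A_6$ (where $|\Out(A_6)|=4$ and $\operatorname{l}(A_6)=6$) is checked by hand. For the sporadic groups $|\Out(S)|\leq 2$ while $\operatorname{l}(S)$ is large, so the bound is immediate from the known minimal degrees.

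The heart of the argument is the groups of Lie type, where I would express both sides through the field size $q=p^f$ and the Lie rank $r$. For the numerator, $|\operatorname{Outdiag}(S)|$ is bounded by the order of the centre of the simply connected cover (a small arithmetic function such as $\gcd(n,q-1)$ for type $A_{n-1}$), together with $|\Phi|=f$ and $|\Gamma|\leq 6$, giving $|\Out(S)|\leq c(r)\,f$ for an explicit polynomial $c(r)$. For the denominator, the minimal faithful degree grows at least like a power of $q$: for instance $\operatorname{l}(\psl(n,q))=(q^n-1)/(q-1)\geq q^{\,r}$ with $r=n-1$, so $\log\operatorname{l}(S)\geq r\log q=r f\log p\geq r f\log 2$. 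Substituting, the required inequality reduces to comparing $c(r)f$ with $3rf\log p$, that is, to an inequality $c(r)\leq 3r\log p$ that no longer involves $f$; this holds for all but finitely many configurations of small rank and small defining field.

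The main obstacle, and where the real work lies, is precisely those small cases: the rank-one and rank-two families (notably $\psl(2,q)$, $\psu(3,q)$, and the Suzuki and Ree groups), together with defining characteristic $p=2$ or $3$, where both the crude estimate $|\operatorname{Outdiag}(S)|\leq c(r)$ and the generic degree formula are too loose to conclude. Here I would replace the crude bounds by exact values, exploiting that the arithmetic of $|\operatorname{Outdiag}(S)|$ often collapses (for example to $1$ for $\psl(n,2)$, since then $q-1=1$) and that the exact minimal degrees are tabulated (for example $\operatorname{l}(\psl(2,q))=q+1$ for large $q$, with the handful of exceptions $q\in\{2,3,4,5,7,9,11\}$ treated individually). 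Assembling these finitely many explicit verifications with the generic estimate completes the proof.
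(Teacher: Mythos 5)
First, a note on the comparison itself: the paper contains no proof of this theorem — it is quoted verbatim from Theorem~A of the cited work of Ballester-Bolinches, Esteban-Romero and Jiménez-Seral on maximal subgroups, so there is no internal argument to measure your proposal against. Your overall strategy (reduce via the classification to a family-by-family check; identify $\operatorname{l}(S)$ with the minimal faithful permutation degree; dispose of alternating and sporadic groups directly; for Lie type play $|\Out(S)|=d\cdot f\cdot g$ against $\operatorname{l}(S)\gtrsim q^{\,\mathrm{rank}}$) is indeed the standard and essentially the only realistic route to such a bound, and those ingredients are correctly assembled.

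There is, however, one concrete gap that would sink the proof as you have organized it: the base of the logarithm, which you treat as a harmless constant, is actually the crux. Your displayed estimates are written for the natural logarithm (you use $\log 2$ as a nontrivial factor in $\log\operatorname{l}(S)\geq rf\log p\geq rf\log 2$). With the natural logarithm the theorem is \emph{false}: for $S=\psl(3,4)$ one has $|\Out(S)|=\gcd(3,q-1)\cdot f\cdot 2=3\cdot 2\cdot 2=12$, while $\operatorname{l}(S)=21$ and $3\ln 21\approx 9.1<12$; more generally, along the infinite family $\psl(3,2^{2k})$ one has $|\Out(S)|=6f$ against $3\ln(q^{2}+q+1)\approx (6\ln 2)f\approx 4.16f$. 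So the statement requires $\log=\log_{2}$, and with that convention your generic reduction $c(r)\leq 3r\log_{2}p$ holds only with \emph{equality} for type $A_{2}$ in characteristic $2$ (there $c(r)=2(r+1)=6=3r\log_{2}2$). Consequently this is not one of ``finitely many configurations of small rank and small defining field'' that can be checked by hand: the entire family $\psl(3,2^{2k})$, $k\geq 1$, sits exactly on the boundary of your generic estimate, and one must invoke the strict inequality $\log_{2}\operatorname{l}(S)=\log_{2}(q^{2}+q+1)>2f$ for \emph{every} $k$ to conclude. The same computation shows the constant $3$ is asymptotically sharp on this family, so the residual analysis in your plan has to be restructured: it is not a finite list of exceptions appended to a crude estimate, but a treatment of whole infinite families using the exact values of $d$, $g$, $f$ and the tabulated minimal degrees.
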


\begin{corolari}
\label{cor:outs<s}
For every simple group $S$,  $|\Out(S)| < |S|$.
\end{corolari}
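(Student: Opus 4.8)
The plan is to split into the abelian and non-abelian cases, handling the first by a direct computation and the second by feeding the bound of Theorem~\ref{teo:cotaout} into an elementary size estimate.

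If $S$ is abelian, then simplicity forces $S \cong C_p$ for a prime $p$. All inner automorphisms are then trivial, so $\Out(S) = \Aut(S) \cong \Aut(C_p) \cong C_{p-1}$ (the case $n = 1$ of Theorem~\ref{teo:aut-caractsimpl}(1)), and hence $|\Out(S)| = p - 1 < p = |S|$.

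Now suppose $S$ is non-abelian. First I would bridge $\operatorname{l}(S)$ and $|S|$: since $S$ is simple, every proper subgroup is core-free (its normal core is a normal subgroup of $S$ properly contained in $S$, hence trivial), so $\operatorname{l}(S)$ is simply the least index of a proper subgroup $H \le S$. The coset action of $S$ on $S/H$ is faithful and transitive on $\operatorname{l}(S)$ points, so the orbit--stabiliser theorem gives $\operatorname{l}(S) = [S:H] \le |S|$. Combining this with Theorem~\ref{teo:cotaout} yields
\[ |\Out(S)| \le 3\log(\operatorname{l}(S)) \le 3\log(|S|), \]
and it remains only to verify that $3\log(|S|) < |S|$.

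For this final estimate I would use that every non-abelian simple group has order at least $60$, so that $|S| \ge 60$, together with the elementary observation that $x \mapsto x - 3\log(x)$ is positive at $x = 60$ and strictly increasing for $x \ge 60$; hence $3\log(|S|) < |S|$ for all admissible orders, and chaining the inequalities gives $|\Out(S)| < |S|$. The only point that needs care is precisely this last estimate, and its delicacy depends on the base of the logarithm: for small permutation degrees the naive bound $3\log(\operatorname{l}(S)) < \operatorname{l}(S)$ can fail (for instance near $\operatorname{l}(S) = 5$ in base $2$), which is exactly why the argument deliberately routes through the uniform lower bound $|S| \ge 60$ rather than through $\operatorname{l}(S)$ alone. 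Everything else is routine.
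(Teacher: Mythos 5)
Your proposal is correct and follows exactly the route the paper intends: the corollary is stated as an immediate consequence of Theorem~\ref{teo:cotaout}, with the deduction $|\Out(S)| \le 3\log(\operatorname{l}(S)) \le 3\log(|S|) < |S|$ (using $|S| \ge 60$ for non-abelian $S$) and the trivial abelian case left implicit. Your write-up simply supplies these routine details, including the sensible precaution of bounding via $|S| \ge 60$ rather than via $\operatorname{l}(S)$ alone.
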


The following result about fixed-point-free automorphisms turns out to be useful here. 

\begin{teorema}[{\cite[Theorem 1.48]{Go82}}]
\label{teo:fixpoint-autsimple}
A group having an automorphism leaving only the identity element fixed is necessarily soluble.
\end{teorema}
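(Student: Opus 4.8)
The plan is to run a minimal-counterexample induction on $|G|$, after isolating the one combinatorial fact that drives everything. Writing $\phi$ for the automorphism and $\delta(g)=g^{-1}\phi(g)$, I would first note that $\delta$ is injective: if $\delta(g)=\delta(h)$ then $hg^{-1}=\phi(hg^{-1})$, so $hg^{-1}$ is a fixed point and hence trivial. Since $G$ is finite, $\delta$ is a bijection, so \emph{every} element of $G$ is of the form $g^{-1}\phi(g)$. Two inheritance facts follow for a $\phi$-invariant normal subgroup $N\trianglelefteq G$. On $N$, the restriction $\phi|_N$ is again fixed-point-free because $\Fix(\phi|_N)\subseteq\Fix(\phi)=1$. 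On $G/N$, the induced map $\bar\phi$ fixes a coset $gN$ precisely when $\delta(g)\in N$; the set of such $g$ is a union of full $N$-cosets and, by bijectivity of $\delta$, has exactly $|N|$ elements, hence is a single coset. Thus $\bar\phi$ is fixed-point-free on $G/N$ as well. Crucially this needs no coprimality assumption, which is what makes the induction close up.

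Now let $G$ be a counterexample of least order. If $G$ had a proper nontrivial $\phi$-invariant normal subgroup $N$, then $N$ and $G/N$ would both be soluble by minimality, and an extension of a soluble group by a soluble group is soluble; so no such $N$ exists. In particular $G$ has no proper nontrivial characteristic subgroup, so $G$ is characteristically simple, $G\cong S^{k}$ for a simple group $S$. If $S$ were abelian, $G$ would be elementary abelian and soluble, a contradiction; hence $S$ is nonabelian simple. Since each factor is normal, $\phi$ cannot stabilise a single factor (that factor would be a proper invariant normal subgroup), so by Theorem~\ref{teo:aut-caractsimpl}(2) the image of $\phi$ in $\Sigma_k$ permutes the $k$ factors transitively, i.e.\ in a single $k$-cycle; a direct computation then identifies $\Fix(\phi)$ with $\Fix(\theta)$ for the ``return'' automorphism $\theta\in\Aut(S)$ obtained by composing the gluing maps once around the cycle. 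So the whole problem collapses to the case $k=1$: a nonabelian simple group admitting a fixed-point-free automorphism.

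The crux, and the step I expect to be the main obstacle, is to rule this out: no nonabelian simple group $S$ has a fixed-point-free automorphism. Inner automorphisms never qualify, since $\alpha_x$ fixes the nontrivial subgroup $\C_S(x)$; the whole difficulty lies in the outer part, which by Corollary~\ref{cor:outs<s} is tightly constrained, $|\Out(S)|<|S|$. One natural line is to pass to the semidirect product $\Gamma=S\rtimes\langle\theta\rangle$ and count: were \emph{every} nontrivial power of $\theta$ also fixed-point-free, $\Gamma$ would be a Frobenius group with kernel $S$, forcing $S$ nilpotent by Thompson's theorem on Frobenius kernels --- absurd for a nonabelian simple group; so the genuine difficulty is confined to the case where some proper power of $\theta$ acquires fixed points, and there one would bring in Brauer's permutation lemma (equating the numbers of $\theta$-invariant conjugacy classes and $\theta$-invariant irreducible characters) together with the smallness of $\Out(S)$. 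I do not expect a uniform classification-free argument to suffice at this last point: the complete known proofs either lean on the Feit--Thompson odd order theorem or carry out a family-by-family verification using the classification of finite simple groups. Everything before this simple-group input is soft, bijection-driven bookkeeping; the simple-group statement is the true hard core.
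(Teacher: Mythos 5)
The paper does not prove this statement at all: it is imported verbatim from Gorenstein's book (the citation \cite{Go82}, Theorem 1.48) as external background, and every known proof of it in this generality rests on the classification of finite simple groups. So the relevant question is whether your proposal is a complete proof, and it is not, as you yourself concede. Your reduction is correct and is the standard one: the injectivity (hence, by finiteness, bijectivity) of $\delta(g)=g^{-1}\phi(g)$; the inheritance of fixed-point-freeness by $\phi$-invariant normal subgroups and by the induced automorphism on quotients, where your coset-counting argument correctly avoids any coprimality hypothesis; the passage to a characteristically simple minimal counterexample $S^k$ with $S$ nonabelian simple; the single-cycle ``return map'' collapse to $k=1$; and the Frobenius--Thompson disposal of the sub-case in which every nontrivial power of $\theta$ is fixed-point-free. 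All of that is sound.

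The genuine gap is the remaining case, which is not a technicality but the entire content of the theorem: showing that a nonabelian simple group $S$ has no fixed-point-free automorphism $\theta$ when some proper power $\theta^m$ does have nontrivial fixed points. Your suggestion of Brauer's permutation lemma combined with $|\Out(S)|<|S|$ (Corollary~\ref{cor:outs<s}) is a heuristic, not an argument, and there is no known way to complete it without the classification; note also that $\C_S(\theta^m)$ is $\theta$-invariant but not normal, so the minimal-counterexample induction you set up cannot be applied to it. Since you explicitly flag this step as out of reach, your text should be read as a correct elementary reduction plus an appeal to the literature for the hard core --- which is, in effect, exactly what the paper itself does by citing \cite{Go82} instead of arguing. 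The accurate summary is: the theorem reduces by soft, bijection-driven bookkeeping to the assertion that no nonabelian finite simple group admits a fixed-point-free automorphism, and that assertion must be quoted, its only known verifications being classification-dependent.
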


Bearing in mind Proposition~\ref{prop:caract-trivials}, the following lemma turns out to be crucial in the  proof of Theorem~\ref{teo:brides-sensesub}.
\begin{lema}
\label{lema:subgrups-GIntG}
Let $G$ be a non-abelian simple group and consider the semidirect product $X = [G]\Int(G)$. Then, $\bar{G} = G \times 1$ and $C = \{(a, \alpha_{a^{-1}}): a\in G\}$ are exactly the  regular subgroups of $X$ isomorphic with~$G$.
\end{lema}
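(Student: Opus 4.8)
The plan is to first identify the ambient group $X = [G]\Int(G)$ as an internal direct product, and then to classify its subgroups isomorphic to $G$ by a Goursat-type argument, discarding the unwanted ones by means of the regularity condition.

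First I would exploit that $G$ is non-abelian simple, so $\Z(G) = 1$ and $a \mapsto \alpha_a$ is an isomorphism $G \cong \Int(G)$; in particular $|X| = |G|^2$ and $\bar{G} = G \times 1$ is normal in $X$, being the base group of the semidirect product. A direct computation of the centralizer shows that $(g,\alpha_a)$ commutes with every $(h,1)$ precisely when $\alpha_a = \alpha_{g^{-1}}$, that is, when $a = g^{-1}$; hence $C_X(\bar{G}) = C = \{(a,\alpha_{a^{-1}}) : a \in G\}$. Being the centralizer of a normal subgroup, $C$ is normal in $X$; since $\bar{G} \cap C = 1$ and $|\bar{G}|\,|C| = |G|^2 = |X|$, I obtain the internal direct decomposition $X = \bar{G} \times C \cong G \times G$. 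At this point I would also record that $\bar{G}$ and $C$ are regular subgroups isomorphic to $G$: for $\bar{G}$ this is clear, and for $C$ the $\Hol(G)$-first coordinate of $(a,\alpha_{a^{-1}})$ runs bijectively through $G$.

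Next I would classify the subgroups $H \leq X$ with $H \cong G$ using the decomposition $X \cong G \times G$. Projecting $H$ onto each direct factor yields a homomorphic image of the simple group $G$, hence either the trivial subgroup or a copy of the whole factor. This forces $H$ to be one of the two factors $\bar{G}$, $C$, or a diagonal subgroup attached to an isomorphism $\bar{G} \to C$. Since $a \mapsto (a,\alpha_{a^{-1}})$ is an anti-isomorphism $G \to C$, such a diagonal is described by an anti-automorphism of $G$, which I write as $x \mapsto \beta(x)^{-1}$ with $\beta \in \Aut(G)$; its generic element, written back in $\Hol(G)$-coordinates, is $(x,1)(\beta(x)^{-1},\alpha_{\beta(x)}) = (x\beta(x)^{-1}, \alpha_{\beta(x)})$.

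The decisive step is then to rule out the diagonals via regularity. A diagonal is regular exactly when its $\Hol(G)$-first-coordinate map $x \mapsto x\beta(x)^{-1}$ is a bijection of $G$. Comparing two values gives $x\beta(x)^{-1} = x'\beta(x')^{-1}$ if and only if $x'^{-1}x \in \Fix(\beta)$, so this map is injective --- and hence, $G$ being finite, bijective --- if and only if $\beta$ is fixed-point-free. By Theorem~\ref{teo:fixpoint-autsimple} a finite group admitting a fixed-point-free automorphism is soluble, whereas $G$ is non-abelian simple and therefore non-soluble; consequently no $\beta \in \Aut(G)$ is fixed-point-free and no diagonal is regular. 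Together with the first step this leaves exactly $\bar{G}$ and $C$, as claimed. I expect the main obstacle to be the careful bookkeeping between the $\Hol(G)$-coordinates $(g,\varphi)$ and the internal direct-product coordinates of $\bar{G} \times C$, and in particular recognizing that the regularity of a diagonal is literally the fixed-point-freeness of the associated automorphism, which is what unlocks Theorem~\ref{teo:fixpoint-autsimple}.
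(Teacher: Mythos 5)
Your proposal is correct and follows essentially the same route as the paper's own proof: compute $\C_X(\bar{G}) = C$ to get the internal decomposition $X = \bar{G} \times C$, recognize any further copy of $G$ as a diagonal (graph of an isomorphism between the factors, which you encode by an automorphism $\beta$ via the anti-isomorphism $a \mapsto (a,\alpha_{a^{-1}})$, just as the paper does via the isomorphism $\tau(g) = (g^{-1},\alpha_g)$), and then observe that regularity forces $\beta$ to be fixed-point-free, contradicting Theorem~\ref{teo:fixpoint-autsimple}. The only difference is organizational --- you classify all subgroups isomorphic to $G$ first and then impose regularity, while the paper argues by contradiction from a putative regular $H \neq \bar{G}, C$ --- which does not change the mathematical content.
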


\begin{proof}
For every $a,b,g\in G$, it holds
\[
(a, \alpha_b)(g,1)(\alpha_{b^{-1}}(a^{-1}), \alpha_{b^{-1}})  = (a\,^bg, \alpha_b)(\alpha_{b^{-1}}(a^{-1}), \alpha_{b^{-1}}) = (^{ab}g, 1)\]
Then, $^{ab}g =  g$ for every $g\in G$ if, and only if, $a = b^{-1}$, as $\Z(G) = 1$. Therefore, $\C_X(\bar{G}) = C = \{(a, \alpha_{a^{-1}}): a \in G\}$, which is a normal subgroup of $X$.

Then $X = \bar{G}C$ as a direct product of $\bar{G}$ and $C$ and both subgroups are isomorphic to $G$. In fact, the map $\tau\colon G \rightarrow C$ given by $\tau(g) = (g^{-1}, \alpha_g)$ is an isomorphism between $G$ and $C$. 

Assume, arguing by contradiction, that there exists $H \leq X$ isomorphic to $G$ such that $H \neq \bar{G}, C$. Note that every element $h \in H$ can be uniquely written  as $(g,1)(c, \alpha_{c^{-1}})$ for some $g, c \in G$. Since $H$ is a simple group, $H \cap \bar{G} = H \cap C = 1$, and then, the projections $\pi_{G}\colon H \rightarrow G$ and $\pi_{C}\colon H \rightarrow C$, given by  $\pi_G((g,1)(c,\alpha_{c^{-1}})) = g$ and $\pi_C((g,1)(c,\alpha_{c^{-1}})) = (c,\alpha_{c^{-1}})$, respectively, are isomorphisms. Thus, we can write
\begin{equation}
\label{eq:2}
 H = \{(g,1)\pi_C\pi_G^{-1}(g): g \in G\}
\end{equation}
and, therefore, $H$ induces the automorphism $\varphi = \tau^{-1} \circ \pi_C \circ \pi_G^{-1}\in \Aut(G)$. Then, \eqref{eq:2} can be written as
\begin{align*}
 H & =  \{(g,1)\tau\varphi(g): g \in G\} =  \{(g,1)(\varphi(g^{-1}), \alpha_{\varphi(g)}): g \in G\}\\
&  = \{ (g\varphi(g^{-1}), \alpha_{\varphi(g)}): g\in G\}
\end{align*}

Since $H$ is a regular subgroup with the same order as $G$, $g\varphi(g^{-1}) \leftrightarrow \alpha_{\varphi(g)}$ provides a one-to-one correspondence between the first and the second component of the elements of $H$. Therefore, the automorphism $\varphi$ has not non-trivial fixed points, as $\varphi(g) = g$ implies that $g\varphi(g^{-1}) = 1 = 1 \varphi(1)$ and so $g = 1$. This contradicts Theorem~\ref{teo:fixpoint-autsimple}. 
\end{proof}

In \cite{Byott04} it is proved that every brace with multiplicative group a simple group must be trivial or almost trivial by means of Hopf-Galois extensions. For the sake of completeness, we give here an alternative proof for the case in which additive and multiplicative groups are isomorphic simple groups.

\begin{lema}
\label{lema:brides2simples}
Let $B$ be a brace such that $(B,+)$ and $(B,\cdot)$ are isomorphic simple groups. The following hold:
\begin{enumerate}
\item if $(B,+)$ is abelian, then $B$ is a trivial brace.
\item if $(B,+)$ is not abelian, then $B$ is either a trivial brace or an almost trivial brace.
\end{enumerate}
\end{lema}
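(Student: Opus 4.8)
The plan is to translate the multiplicative structure into the language of regular subgroups of the holomorph via Proposition~\ref{prop:brida-holomorph} and then exploit simplicity together with the bound on outer automorphisms from Corollary~\ref{cor:outs<s}. Throughout, write $G = (B,+)$ and recall that $H_B = \{(b,\lambda_b) : b\in B\}$ is a regular subgroup of $\Hol(G)$ isomorphic to $(B,\cdot) \cong G$, and that the key leverage is the homomorphism $\lambda\colon (B,\cdot)\to\Aut(G)$ whose kernel, by simplicity of $(B,\cdot)$, must be either trivial or the whole group.

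For statement (1), the additive group $G \cong C_p$ is cyclic of prime order, so $\Aut(G) \cong C_{p-1}$. Since $(B,\cdot)$ is simple of order $p$, the kernel of $\lambda$ is either trivial or all of $(B,\cdot)$. A trivial kernel would embed $C_p$ into $C_{p-1}$, which is impossible on order grounds, so $\Ker\lambda = (B,\cdot)$ and $B$ is trivial by Proposition~\ref{prop:caract-trivials}(1). This part is immediate.

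For statement (2), I would first reduce to the subholomorph $X = [G]\Int(G)$. As $G$ is non-abelian simple, $\Z(G)=1$ and $\Int(G)\cong G$. Composing $\lambda$ with the projection $\Aut(G)\to\Out(G)$ yields a homomorphism $(B,\cdot)\to\Out(G)$; by simplicity of $(B,\cdot)$ it is either injective or trivial, and injectivity would force $|G|\le|\Out(G)|$, contradicting Corollary~\ref{cor:outs<s}. Hence $\lambda_b\in\Int(G)$ for every $b\in B$, so that $H_B\le X$. Now $H_B$ is a regular subgroup of $X$ isomorphic to $G$, and Lemma~\ref{lema:subgrups-GIntG} leaves exactly two possibilities. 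If $H_B=\bar G=\{(b,1):b\in B\}$, then $\lambda_b=1$ for all $b$ and $B$ is trivial by Proposition~\ref{prop:caract-trivials}(1). If instead $H_B=C=\{(a,\alpha_{a^{-1}}):a\in B\}$, then, since inversion in $G$ is $a^{-1}=-a$, we have $\lambda_a=\alpha_{-a}$ for all $a$, and $B$ is almost trivial by Proposition~\ref{prop:caract-trivials}(2). This exhausts the cases.

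The main obstacle is the reduction step that places $\lambda(B)$ inside the inner automorphisms: this is precisely where the quantitative bound $|\Out(S)|<|S|$ is indispensable, since without it one could not rule out an exotic embedding of $G$ into $\Out(G)$, and the reduction to $X=[G]\Int(G)$ — on which the applicability of Lemma~\ref{lema:subgrups-GIntG} rests — would collapse. Once $H_B\le X$ is secured, identifying the two admissible regular subgroups with the trivial and almost trivial cases is a direct comparison with the characterisations in Proposition~\ref{prop:caract-trivials}.
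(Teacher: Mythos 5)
Your proposal is correct and follows essentially the same route as the paper: translate to the regular subgroup $H_B \leq \Hol(B,+)$, use simplicity of $(B,\cdot)$ together with Corollary~\ref{cor:outs<s} to force $\lambda(B) \leq \Int(B,+)$, and then invoke Lemma~\ref{lema:subgrups-GIntG} and Proposition~\ref{prop:caract-trivials} to identify the two possibilities. The only (cosmetic) difference is that you phrase the reduction via the kernel of the composite $(B,\cdot)\to\Out(B,+)$ rather than via the normality of $\lambda(B)\cap\Int(B,+)$ in $\lambda(B)$, which lets you treat the trivial and almost trivial cases uniformly at the end instead of assuming $B$ non-trivial up front.
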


\begin{proof}
By Proposition~\ref{prop:brida-holomorph}, $H = \{(b, \lambda_b): b \in B\} \cong (B, \cdot)$ is a regular subgroup of $\Hol(B,+)$. 

If $(B,+)$ is abelian, then $(B,+)$ cyclic of prime order and, therefore, $\Ker \lambda \neq 0$ as $\Ker \lambda = 0$ would imply $(B,+) \cong (B, \cdot) \cong \Aut(B,+)$. Thus, $\Ker\lambda = B$ and so $B$ is trivial.

Assume that $B$ is not abelian and $B$ is not a trivial brace. Then $\lambda(B)$ is a simple subgroup of $\Aut(B,+)$. Since $\lambda(B) \cap \Int(B,+) \unlhd \lambda(B)$, it follows that either $\lambda(B) \cap \Int(B,+) = 0$ or $\lambda(B) \leq \Int(B,+)$. In the first case, $\Out(B,+)$ has a subgroup of order $|B|$, which is not possible by Corollary~\ref{cor:outs<s}.
Then $\lambda(B)  = \Int(B,+) \cong (B,+)$ and $H$ is a regular subgroup of the semidirect product $[(B,+)]\Int(B,+)$. By Lemma~\ref{lema:subgrups-GIntG}, either $H = \{(b,1): b \in B\}$ or $H = \{(b, \alpha_{-b}): b\in B\}$. Since $B$ is not trivial, the latter holds. Thus $B$ is  almost trivial by Proposition~\ref{prop:caract-trivials}.
\end{proof}

We prove now Theorem~\ref{teo:brides-sensesub}.

\begin{proof}[Proof of Theorem~\ref{teo:brides-sensesub}]

If $B$ is a trivial brace, it follows that the additive group $(B,+)$ has not proper subgroups and, therefore, $B$ is isomorphic to a cyclic group of prime order.

Assume that $B$ is not a trivial brace. We split the proof in several steps towards a contradiction. 

\medskip

\emph{Step 1. $(B,+)$ is a characteristically simple group.}

Assume that $(K,+)$ is a proper characteristic subgroup of $(B,+)$. Then, for every $b\in B$, $\lambda_b(K) \subseteq K$. It is well known that in this case $K$ is a proper subbrace of $B$, which is a contradiction.

Therefore $(B,+)$ is a characteristically simple group and then we can assume that $(B,+) = S^n$ is a finite direct product of $n$ copies of a simple group $S$, for some $n\geq 1$. 

\medskip

\emph{Step 2. $(B, \cdot)$ is isomorphic to a subgroup $H$ of $\Aut(B,+)$. As a consequence, $S$ is not abelian.}

Recall that $\Ker\lambda$ is a subbrace of $B$ and so $\Ker\lambda = 0$. Therefore, $(B, \cdot)$ is isomorphic to a subgroup of $\Aut(B,+)$.

Suppose that $S$ is a cyclic group of  prime order $p$. Then $(B,+) = C_p^n$ is an elementary abelian $p$-group. If $n = 1$, then $|\Aut(B,+)| < p = |(B,\cdot)|$, which is not possible. Thus $n > 1$. By Theorem~\ref{teo:aut-caractsimpl}, we can assume $\Aut(B,+) = \text{GL}(n,p)$. Let $\mathcal{T}$ be the set of upper triangular matrices with $1$s on the diagonal. Then $\mathcal{T}$ is a Sylow $p$-subgroup of $\text{GL}(n,p)$. Since $\lambda(B)$ is a $p$-subgroup, it follows that $\lambda(B) \leq \mathsf{A}\mathcal{T}\mathsf{A}^{-1}$ for some element $\mathsf{A} \in \text{GL}(n,p)$.
Then $L = \{(k,0,\ldots,0): k \in C_p\}$ is a subgroup of $(B,+)$ that is invariant by the action of $\mathcal{T}$, as $\mathsf{T}(L) = L$ for every $\mathsf{T}\in \mathcal{T}$. Consequently $\mathsf{A}(L)$ is a $\lambda$-invariant subgroup of $(B,+)$ and so $L$ is a proper subbrace of $B$, contrary to assumption.  

Hence $S$ is a non-abelian simple group and, in this case, $\Aut(B,+) \cong \Aut(S)\wr \Sigma_n$ and $\Int(B,+) \cong (B,+)$, by Theorem~\ref{teo:aut-caractsimpl}.

Write $H = \lambda(B) \leq \Aut(B,+)$.

\medskip

\emph{Step 3. Either $H \cap \Int(B,+) = 0$ or $H = \Int(B,+)$.}

Note that $f\alpha_xf^{-1} = \alpha_{f(x)}$ for every $\alpha_x\in \Int(B,+)$ and $f\in \Aut(B,+)$. Let $\alpha_x \in \Int(B,+)\cap H$ and $\lambda_b\in H$, for some $x,b\in B$. Since $H \cap \Int(B,+)$ is a normal subgroup of $H$, it follows $\lambda_b\alpha_x\lambda_b^{-1} = \alpha_{\lambda_b(x)} \in H \cap \Int(B,+)$. Thus $H \cap \Int(B,+)$ is isomorphic to a $\lambda$-invariant subgroup of $(B,+)$, which is a subbrace of $B$. Therefore either $H \cap \Int(B,+) = 0$ or $H  = \Int(B,+)$.

\medskip

\emph{Step 4. $H \cap \Int(B,+) \neq 0$.}

Assume that $H \cap \Int(B,+) = 0$. Then, $H$ is isomorphic to a subgroup of $\Out(B,+)$. By Remark~\ref{nota:Int&Out-nonsimple}, we have that $ \Out(B,+) \cong \Out(S) \wr \Sigma_n$. Then $|H| = |B| = |S|^n$ divides $|\Out(S)|^n n!$. 

Observe that $|S|$ does not divide $|\Out(S)|$, as $|\Out(S)| < |S|$ by Corollary~\ref{cor:outs<s}.
Then we can find a prime $p$ such that $p^\alpha \mid |S|$ but $p^\alpha \nmid |\Out(S)|$, for some $\alpha\geq 1$. Suppose that $p^{\beta}$ is the highest power of $p$ dividing $|\Out(S)|$, for some $0 \leq \beta < \alpha$. Then $p^{n(\alpha - \beta)}\mid n!$ and  this is a contradiction because $p^n$ does not divide $n!$. Hence $(B, \cdot) \cong H = \Int(B,+) \cong S^n$.

\medskip

\emph{Step 5. $n = 1$.}

Since $H \cong S^n$, we have that there exists $N \unlhd H$, with $N \cong S$. As in Step~3, it follows that there exists a $\lambda$-invariant subgroup of $(B,+)$ isomorphic to $S$. Therefore $S^n = S$. 

Thus, $(B,\cdot) \cong (B,+) \cong S$ and by Lemma~\ref{lema:brides2simples}, $B$ must be almost trivial. In this case, since $B$ has not proper subbraces, it follows that $(B,+)$ has not proper subgroups. Hence $(B,+)$ is a cyclic group and then $B$ is trivial, which provides the final contradiction.
\end{proof}

\section{Soluble skew left braces}
\label{sec:sol}

In this section, we study the ideal and subbrace structure of braces and introduce a definition of solubility that turns out to be the natural framework to undertake such study. 
To this end it is convenient to have a useful definition of a commutator of ideals that enables us to get a tighter grip on the nature of solubility by constructing canonical abelian series for such braces. In this context, the definition of commutator introduced by Bourn, Facchini and Pompili in \cite{BournFacchiniPompili23} turns out to be the correct one.

\begin{definicio}
Let $I,J$ be ideals of a brace $B$. The \emph{commutator} $[I,J]_B$ is defined to be as the smallest ideal of $B$ containing $[I,J]_+$, $[I,J]_{\bigcdot}$, and the set 
\[ \{ij - (i+j):\, i \in I, \, j \in J\}.\]
It follows that  $[I,J]_B = [J,I]_B$.
\end{definicio}

In \cite{BonattoJedlicka23}, a brace-theoretical analogue of the centre of a group is studied. It was first defined in \cite{CatinoColazzoStefanelli19} in the context of ideal extensions of braces. The \emph{centre of a brace} (also known as \emph{annihilator of a brace}) is the set
\[ \zeta(B) = \Soc(B) \cap \Fix(B) = \{a \in B \, |\, a + b = b + a = ab = ba, \ \forall\, b \in B\},\]
which turns out to be an ideal of~$B$.

The relationship between commutators and centres is enshrined in: 

\begin{proposicio}
\label{prop:central-commut}
Let $I, J$ be ideals of a brace $B$ with $J \subseteq I$. Then, $I/J \subseteq \zeta(B/J)$  if, and only if, $[I,B]_B \subseteq J$. 
\end{proposicio}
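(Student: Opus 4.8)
The plan is to unwind both the definition of the centre $\zeta(B/J)$ and the definition of the commutator $[I,B]$, and to show that the containment $I/J \subseteq \zeta(B/J)$ is literally equivalent to the three generating families of $[I,B]$ lying inside $J$. Throughout I write $\bar{x} = x + J$ for the image in $B/J$, and I use freely that $\zeta(B/J)$ is an ideal and that $I/J$, being the image of the ideal $I$, is an ideal of $B/J$.

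First I would record what $I/J \subseteq \zeta(B/J)$ means concretely. By the displayed description of the centre, $\bar{\imath} \in \zeta(B/J)$ for all $i \in I$ is equivalent to saying that for every $i \in I$ and every $b \in B$ one has $\bar{\imath} + \bar{b} = \bar{b} + \bar{\imath} = \bar{\imath}\,\bar{b} = \bar{b}\,\bar{\imath}$ in $B/J$. Translating each equality back through the quotient map: $\bar{\imath} + \bar{b} = \bar{b} + \bar{\imath}$ says $[i,b]_+ \in J$ (the additive commutator lands in $J$); $\bar{\imath}\,\bar{b} = \bar{b}\,\bar{\imath}$ says $[i,b]_\cdot \in J$ (the multiplicative commutator lands in $J$); and $\bar{\imath} + \bar{b} = \bar{\imath}\,\bar{b}$ says $ib - (i+b) \in J$. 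So the condition $I/J \subseteq \zeta(B/J)$ is exactly the conjunction, for all $i \in I$ and $b \in B$, of
\[
[i,b]_+ \in J, \qquad [i,b]_\cdot \in J, \qquad ib - (i+b) \in J.
\]

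Next I would compare this with $[I,B] \subseteq J$. By definition $[I,B]$ is the smallest ideal of $B$ containing the additive commutators $[i,b]_+$, the multiplicative commutators $[i,b]_\cdot$, and the elements $ib - (i+b)$, for $i \in I$, $b \in B$. Since $J$ is itself an ideal, $[I,B] \subseteq J$ holds if and only if all three generating families are contained in $J$ — the ``smallest ideal'' is below $J$ precisely when its generators are. This is exactly the conjunction isolated in the previous paragraph, so the two conditions coincide and both implications follow at once. One mild point to be careful about: I should note that $J \subseteq I$ guarantees $I/J$ makes sense as a subset of $B/J$, and that $\zeta(B/J)$ is taken with respect to the quotient brace, whose operations are the induced ones, so the translation of equalities in $B/J$ into membership in $J$ is valid.

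The only genuine obstacle is the careful bookkeeping in the translation step, ensuring that each of the four defining equalities of the centre in $B/J$ corresponds correctly to one of the three generator families of the commutator. In particular the four-fold equality $\bar{\imath}+\bar{b} = \bar{b}+\bar{\imath} = \bar{\imath}\bar{b} = \bar{b}\bar{\imath}$ collapses to three independent conditions, and I would verify that the pair $\bar{\imath}\bar{b} = \bar{b}\bar{\imath}$ together with $\bar{\imath}+\bar{b}=\bar{\imath}\bar{b}$ does not secretly impose more (or less) than the generators $[i,b]_\cdot$ and $ib-(i+b)$; once that matching is confirmed, the equivalence is immediate and no further computation is needed.
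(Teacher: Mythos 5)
Your proposal is correct and follows essentially the same route as the paper: the paper's proof likewise translates $I/J \subseteq \zeta(B/J)$ into the three membership conditions $[x,b]_+,\ [x,b]_\cdot,\ xb-(x+b) \in J$ for all $x \in I$, $b \in B$, and then concludes via the minimality of $[I,B]$ among ideals containing those generators. The collapse of the four-fold coset equality to these three conditions, which you flag as the point to verify, is exactly the (immediate) content of the paper's one-line computation, so nothing further is needed.
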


\begin{proof}
For every $x\in I$ and $b\in B$, note that 
\[ x+b + J = b+x + J = xb + J = bx + J =  (xb)J = (bx)J\]
if, and only if, $[x,b]_+, [x,b]_{\bigcdot}, xb - (x+b) \in J$. Hence the result follows.
\end{proof}


The commutator concept enables us to construct a canonical abelian series for braces. If $I$ is an ideal of a brace $B$, we define the \emph{commutator} or \emph{derived ideal} of $I$ with respect to $B$ as $\partial_B(I) = [I,I]_B$. If $B = I$, then we say that $\partial_B(B) = \partial (B)$ is the \emph{commutator} or \emph{derived ideal} of $B$. A brace $B$ is said to be \emph{abelian} if $\partial(B) = 0$,  that is, $B$ is a trivial brace with an abelian group structure or, equivalently, $\zeta(B) = B$. 

By repeatedly forming derived ideals, we obtain a descending sequence of ideals
\[ \partial_0(I) = I \supseteq \partial_1(I) = \partial_B(I) \supseteq \ldots \supseteq \partial_n(I) \supseteq \ldots\]
with $\partial_{n}(I) = \partial_B(\partial_{n-1}(I))$ for every $n\in \mathbb{N}$. We call this series the \emph{derived series} of $I$ with respect to $B$. Note that it is an \emph{abelian series}, as each factor $\partial_{n-1}(B)/\partial_{n}(B)$ is an abelian brace for every $n\in \mathbb{N}$ by Proposition~\ref{prop:central-commut}. In particular, the first factor $B/\partial(B)$ is the greatest abelian quotient in $B$.

\begin{definicio}
We say that an ideal $I$ of $B$ is  \emph{soluble with respect to $B$}, if there exists a non negative integer $n$ such that $\partial_n(I) = 0$. In the case $I = B$, we simply say that $B$ is a \emph{soluble brace}.
\end{definicio}

\begin{nota}
Let $B$ be a soluble brace. The shortest length of an abelian series at $B$ is called the \emph{derived length} of $B$. By convention, $B$ has derived length $0$ if, and only if, $B = 0$. It is clear then that abelian braces are soluble braces of derived length $1$. It also follows that soluble braces are closed for subbraces, quotients, direct products and extensions of ideals: if $I$ an ideal of a brace $B$ such that $B/I$ is soluble and $I$ is soluble with respect to $B$, then $B$ is also soluble.
\end{nota}

\begin{proposicio}
\label{prop:abseries-derseries}
Let $B$ be a brace admitting an abelian series, that is, a series of ideals of $B$,
\[ B = I_0 \supseteq I_1 \supseteq \ldots \supseteq I_n = 0,\]
such that $I_{i-1}/I_i$ is abelian for every $1 \leq i \leq n$. Then $\partial_i(B) \subseteq I_i$ for every $0 \leq i \leq n$. In particular, $\partial_n(B) = 0$ so $B$ is soluble. Therefore the derived length of $B$ is the length of the derived series of~$B$.
\end{proposicio}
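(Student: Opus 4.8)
The plan is to prove the containment $\partial_i(B)\subseteq I_i$ by induction on $i$ and then read off both remaining assertions. The base case $i=0$ is immediate since $\partial_0(B)=B=I_0$. For the inductive step I would assume $\partial_i(B)\subseteq I_i$ and aim to show $\partial_{i+1}(B)=\partial(\partial_i(B))\subseteq I_{i+1}$. The argument splits into two elementary facts about the derived ideal that I would isolate first: \textbf{(i)} \emph{monotonicity}, namely that $S\subseteq S'$ for subbraces implies $\partial(S)\subseteq\partial(S')$; and \textbf{(ii)} the \emph{translation of abelianness}, namely that if $A/J$ is abelian then $\partial(A)\subseteq J$. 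Granting these, the step is immediate: by (i) applied to $\partial_i(B)\subseteq I_i$ we get $\partial_{i+1}(B)\subseteq\partial(I_i)$, and since $I_i/I_{i+1}$ is abelian, (ii) gives $\partial(I_i)\subseteq I_{i+1}$, whence $\partial_{i+1}(B)\subseteq I_{i+1}$.

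Both auxiliary facts rest on the description of $\partial(A)=[A,A]$ as the smallest ideal of $A$ containing the generating set $T_A=\{[x,y]_+,\,[x,y]_\cdot,\,xy-(x+y):x,y\in A\}$. For (ii), the canonical projection $\pi\colon A\to A/J$ is a brace homomorphism carrying $T_A$ onto the corresponding generating set of $\partial(A/J)$; since $A/J$ being abelian means $\partial(A/J)=0$, every element of $T_A$ lies in $\Ker\pi=J$, and as $J$ is an ideal of $A$ containing $T_A$, minimality of $\partial(A)$ forces $\partial(A)\subseteq J$. For (i), the point is that $\partial(S')\cap S$ is an ideal of the subbrace $S$ and it contains $T_S$, because each generator $[x,y]_+,[x,y]_\cdot,xy-(x+y)$ with $x,y\in S\subseteq S'$ both lies in $S$ and belongs to $T_{S'}\subseteq\partial(S')$; minimality then yields $\partial(S)\subseteq\partial(S')\cap S\subseteq\partial(S')$.

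I expect the main obstacle to be fact (i): the two derived ideals live inside different ambient braces ($\partial(S)$ is an ideal of $S$, while $\partial(S')$ is an ideal of $S'$), so the inclusion must be read as set containment and routed through the intermediate object $\partial(S')\cap S$. Verifying that this intersection is again an ideal of $S$ is where the brace axioms genuinely enter: additive and multiplicative normality pass to the intersection with the subgroup $S$, and the left-ideal condition $\lambda_s(\partial(S')\cap S)\subseteq\partial(S')\cap S$ for $s\in S$ follows from $\partial(S')$ being a left ideal of $S'$ together with $\lambda_s(S)\subseteq S$.

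Finally, running the induction up to $i=n$ gives $\partial_n(B)\subseteq I_n=0$, so $\partial_n(B)=0$ and the derived series of $B$ terminates after at most $n$ steps. Taking the given abelian series to be one of shortest length $n$, equal to the derived length, this shows the derived series has length at most the derived length; conversely the derived series is itself an abelian series, so its length is at least the derived length. Hence the two coincide, which is the last assertion.
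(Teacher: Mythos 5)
Your proof is correct and follows essentially the same route as the paper: the same induction on $i$, with the inductive step $\partial_{i+1}(B)\subseteq\partial(I_i)\subseteq I_{i+1}$ and the same closing comparison of series lengths. The only difference is that you explicitly isolate and verify the two inclusions (monotonicity of $\partial$ via the ideal $\partial(S')\cap S$, and $\partial(I_i)\subseteq I_{i+1}$ via the quotient map), which the paper treats as immediate consequences of the minimality in the definition of the commutator ideal.
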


\begin{proof}
For $i = 0$, $\partial_0(B) = B = I_0$. Now, assume that $\partial_i(B) \subseteq I_i$ for some $0\leq i < n$.  Then, $\partial_{i+1}(B) \subseteq \partial_B(I_i) \subseteq I_{i+1}$ as $I_i/I_{i+1}$ is abelian. Hence, the length of every abelian series of $B$ is less than or equal to the length of the derived series of~$B$.
\end{proof}

It follows at once from this that no abelian series of ideals can be shorter than the derived series of a soluble brace. As a consequence, simple soluble braces and minimal quotients of soluble braces are characterised.
\begin{corolari}
\label{cor:soluble-simple}
Let $B$ be a simple soluble brace. Then, $B$ is an abelian brace isomorphic to a cyclic group of prime order.
\end{corolari}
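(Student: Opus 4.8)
The plan is to combine solubility with simplicity to pin down the structure completely. Since $B$ is simple, it has no non-trivial proper ideals, so in particular its derived ideal $\partial(B) = [B,B]$ must be either $0$ or all of $B$. I would first rule out $\partial(B) = B$: by Proposition~\ref{prop:abseries-derseries}, solubility of $B$ means the derived series reaches $0$ after finitely many steps, i.e.\ $\partial_n(B) = 0$ for some $n$. But if $\partial(B) = B$ then $\partial_k(B) = B$ for all $k$, forcing $B = 0$, contradicting that a simple brace is non-zero. Hence $\partial(B) = 0$, which by definition means $B$ is an \emph{abelian} brace.

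Once $B$ is abelian, it is a trivial brace whose additive group $(B,+)$ is abelian, and $\zeta(B) = B$. The next step is to exploit simplicity again at the level of the additive group. For an abelian (hence trivial) brace, the two operations coincide, so every additive subgroup is automatically closed under the multiplicative operation and under the $\lambda$-action (which is trivial here, since $\Ker\lambda = B$); consequently every subgroup of $(B,+)$ is an ideal of $B$. Simplicity of $B$ then forces $(B,+)$ to have no proper non-trivial subgroups, so $(B,+)$ is a cyclic group of prime order. This gives the claim: $B$ is abelian, trivial, and isomorphic to $C_p$ for some prime $p$.

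I would write this out in the order: (1) observe $\partial(B)$ is an ideal and invoke simplicity to get $\partial(B) \in \{0, B\}$; (2) eliminate the case $\partial(B) = B$ using Proposition~\ref{prop:abseries-derseries} and the non-triviality of a simple brace; (3) conclude $B$ is abelian; (4) argue that every additive subgroup of an abelian brace is an ideal, so simplicity forces $(B,+) \cong C_p$.

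The main obstacle, though a mild one, is step (4): I must be careful to verify that in an abelian brace every subgroup of $(B,+)$ is genuinely an \emph{ideal}, not merely a subgroup. This needs the fact that an abelian brace is trivial with the two operations equal and $\lambda$ trivial, so that the defining conditions for an ideal ($(I,+) \trianglelefteq (B,+)$, which is automatic as $(B,+)$ is abelian, and $B \ast I \subseteq I$, which is automatic as $a \ast b = \lambda_a(b) - b = 0$) all hold vacuously. After that, the reduction to prime order is the standard fact that a non-zero abelian group without proper non-trivial subgroups is cyclic of prime order.
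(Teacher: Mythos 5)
Your proposal is correct and follows exactly the route the paper intends: the paper states this corollary without proof as an immediate consequence of Proposition~\ref{prop:abseries-derseries}, and your argument (simplicity forces $\partial(B)\in\{0,B\}$, solubility rules out $\partial(B)=B$, hence $B$ is abelian, and in an abelian brace every additive subgroup is an ideal, so simplicity yields $(B,+)\cong C_p$) is precisely that intended chain of reasoning. Your care in step (4), checking that subgroups of $(B,+)$ really are ideals because $\lambda$ is trivial and both group structures coincide and are abelian, is exactly the point that makes the conclusion legitimate.
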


\begin{corolari}
\label{cor:soluble-maxid}
Let $I$ be a maximal ideal of a soluble brace $B$. Then, $B/I$ is an abelian brace isomorphic to a cyclic group of prime order. 
\end{corolari}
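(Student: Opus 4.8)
The plan is to reduce the statement to Corollary~\ref{cor:soluble-simple} by verifying that $B/I$ is a \emph{simple soluble} brace; once that is in place, the conclusion is immediate.

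First I would translate the maximality of $I$ into simplicity of $B/I$ using the correspondence theorem for brace ideals: the ideals of $B/I$ are exactly the braces $J/I$ with $J$ an ideal of $B$ satisfying $I \subseteq J$. Since $I$ is a maximal ideal, no ideal of $B$ lies strictly between $I$ and $B$, so the only ideals of $B/I$ are $I/I = 0$ and $B/I$ itself. As $I$ is proper we have $B/I \neq 0$, and therefore $B/I$ is a non-zero brace with no non-trivial quotient; that is, $B/I$ is simple in the sense of the Introduction.

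Next I would observe that $B/I$ is soluble. This is recorded in the Remark following the definition of soluble brace, where it is noted that solubility passes to quotients; concretely, the images of an abelian ideal series of $B$ under the projection $B \to B/I$ form an abelian series of $B/I$, because homomorphic images of abelian braces are abelian and the image of an ideal is an ideal of the image brace.

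Finally, with $B/I$ exhibited as a simple soluble brace, Corollary~\ref{cor:soluble-simple} applies verbatim and gives that $B/I$ is an abelian brace isomorphic to a cyclic group of prime order, as required. I do not expect any genuine obstacle here: the single point deserving attention is the ideal correspondence theorem, which has not been stated explicitly in the excerpt but is standard and follows at once from the fact that ideals are normal in both group structures and are preserved and reflected by the canonical projection.
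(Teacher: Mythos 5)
Your proof is correct and follows exactly the route the paper intends: the paper presents this corollary (together with Corollary~\ref{cor:soluble-simple}) as an immediate consequence of the derived-series discussion, with the quotient $B/I$ being a simple soluble brace via the ideal correspondence and closure of solubility under quotients, precisely as you argue. The two points you flag --- the correspondence theorem for brace ideals and the quotient-closure of solubility --- are indeed the only inputs needed, and both are standard (the latter is explicitly recorded in the paper's remark following the definition of solubility).
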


Our focus now is the impact of these notions on the ideal and subbrace structure of a brace.  In particular, we work towards a proof of Theorem~\ref{teo:soluble-chief^maximal}.

Our next result is an application of Theorem~\ref{teo:brides-sensesub}.

\begin{teorema}
\label{teo:max-centvprime}
Let $B$ be a finite brace  and $S$ be a maximal subbrace of $B$. Then, either $\zeta(B) \subseteq S$ or $S$ is an ideal of $B$ such that $B/S$ is an abelian brace isomorphic to a cyclic group of prime order. 
\end{teorema}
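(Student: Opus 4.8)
The plan is to dispose of the case $\zeta(B)\subseteq S$ at once and to show that its negation forces $S$ to be an ideal of prime index. So I would assume $\zeta(B)\not\subseteq S$. Since $\zeta(B)$ is an ideal of $B$ and $S$ is a subbrace, Proposition~\ref{prop:product_id-subbr} guarantees that $S\zeta(B)=S+\zeta(B)$ is again a subbrace of $B$. Our hypothesis makes this subbrace strictly larger than $S$, so the maximality of $S$ leaves only the possibility $B=S\zeta(B)=S+\zeta(B)$. Using that $\zeta(B)\subseteq\Fix(B)$ gives $s+z=s\cdot z$ for $s\in S$ and $z\in\zeta(B)$, so every element $b\in B$ can be written simultaneously as $b=s+z$ and $b=s\cdot z$ with $s\in S$, $z\in\zeta(B)$.

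Next I would prove that $S$ is an ideal, which is the technical heart of the argument. The key is that each $z\in\zeta(B)$ is central in both group structures and lies in $\Ker\lambda$, because $\zeta(B)\subseteq\Soc(B)\subseteq\Ker\lambda$. Writing $b=s+z$, additive conjugation of $x\in S$ collapses via the additive centrality of $z$ to $b+x-b=s+x-s\in S$, so $(S,+)\trianglelefteq(B,+)$; writing $b=s\cdot z$, the multiplicative centrality of $z$ gives $bxb^{-1}=sxs^{-1}\in S$, so $(S,\cdot)\trianglelefteq(B,\cdot)$. Finally, since $\lambda$ is a homomorphism and $\lambda_z=\mathrm{id}$, we obtain $\lambda_b=\lambda_s$, whence $\lambda_b(x)=-s+sx\in S$ for $x\in S$; thus $S$ is a left ideal. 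Being a left ideal that is additively and multiplicatively normal, $S$ is an ideal of $B$.

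With $S$ an ideal, I would finish through Theorem~\ref{teo:brides-sensesub}. The correspondence theorem for braces over an ideal identifies the subbraces of $B/S$ with the subbraces of $B$ containing $S$; the maximality of $S$ therefore says that $B/S$ has no proper subbraces. As $B$ is finite, so is $B/S$, and Theorem~\ref{teo:brides-sensesub} applies to yield that $B/S$ is a trivial brace isomorphic to a cyclic group of prime order, which is in particular an abelian brace. This is exactly the desired conclusion.

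I expect the main obstacle to be the verification that $S$ is genuinely an ideal: all three defining conditions (additive normality, multiplicative normality, and $\lambda$-invariance) must be checked, and each one leans on a different facet of the centrality of $\zeta(B)$---commutativity in $(B,+)$, commutativity in $(B,\cdot)$, and membership in $\Ker\lambda\cap\Fix(B)$, respectively. Once $B=S\zeta(B)$ is in hand and these properties are combined, the passage to $B/S$ and the appeal to Theorem~\ref{teo:brides-sensesub} are formal.
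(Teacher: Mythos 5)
Your proposal is correct and follows essentially the same route as the paper: maximality together with Proposition~\ref{prop:product_id-subbr} forces $B = S\zeta(B) = S + \zeta(B)$, the centrality properties of $\zeta(B)$ (additive, multiplicative, and $\lambda$-triviality) make $S$ an ideal, and Theorem~\ref{teo:brides-sensesub} applied to $B/S$ finishes the argument. The only cosmetic difference is in certifying that $S$ is an ideal: you verify multiplicative normality $bxb^{-1} = sxs^{-1} \in S$ directly from the definition, whereas the paper instead computes $x \ast b = x \ast s \in S$ and uses the criterion that a left ideal with $(S,+)$ normal and $S \ast B \subseteq S$ is an ideal; both characterizations appear in the preliminaries, so this is an equivalent verification, not a different proof.
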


\begin{proof}
Assume that $\zeta(S)$ is not contained in $S$. By Proposition~\ref{prop:product_id-subbr}, $S\zeta(B) = S + \zeta(B)$ is a subbrace of $B$, and so $B = S\zeta(B)$ by maximality of $S$. We will show that $S$ is an ideal of $B$. Let $x\in S$ and $b = s+z= sz \in B$, for some $z\in \zeta(B)$ and $s \in S$. Then
\begin{align*}
\lambda_b(x) & =  \lambda_{sz}(x) = \lambda_s(\lambda_z(x)) = \lambda_s(x) \in S;\\
x \ast b & =  x \ast (s+z) = -x + x(s+z) - (s+z) = -x + xs -x + xz -z - s \\
& =  -x + xs - x + x + z - z -s = x \ast s \in S.
\end{align*}
Furthermore, since $B = S + \zeta(B)$, it follows that $(S, +)$ is a normal subgroup of $(B, +)$. Consequently $S$ is an ideal of $B$. 

Therefore, $B/S$ is a brace without subbraces and, by Theorem~\ref{teo:brides-sensesub}, $B/S$ is a trivial brace isomorphic to a cyclic group of prime order. Hence $B/S$ is abelian.
\end{proof}

\begin{definicio}
The \emph{Frattini} subbrace $\Phi(B)$ of a brace $B$ is the intersection of all maximal subbraces of $B$, if such exist, and $B$ otherwise. 
\end{definicio}

Let $S$ be a subset of a brace $B$. The subbrace (resp. ideal) generated by $S$ is the smallest subbrace (resp. ideal) of $B$ containing~$S$. We say that an element $a\in B$ is \emph{non-subbrace (resp. non-ideal) generating} if the subbrace (resp. ideal) generated by $S \cup \{a\}$ coincides with the subbrace (resp. ideal) generated by $S$ for every $S \subseteq B$. It is easily observed that $\Phi(B)$ coincides with the set of non-subbrace generating elements of~$B$.

\begin{nota}
Non-ideal generating elements were introduced in~\cite{JespersKubatVanAntwerpenVendramin21} within the context of brace-theoretical analogues of normal closures and weights in group theory. Then, the \emph{radical} of a brace $B$ was defined as the intersection of all maximal ideals, if such exists, or $B$ otherwise. It turns out that the radical of a brace coincides with the set of non-ideal generating elements.
\end{nota}

\begin{corolari}
Let $B$ be a finite brace  and $S$ be a maximal subbrace of $B$. Then, either $\zeta(B) \subseteq S$ or $\partial(B) \subseteq S$. In particular, $\zeta(B) \cap \partial(B)$ is contained in $\Phi(B)$.
\end{corolari}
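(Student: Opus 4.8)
The plan is to deduce this corollary directly from Theorem~\ref{teo:max-centvprime} by a dichotomy argument over the maximal subbraces of $B$. Fix a maximal subbrace $S$ of $B$. The theorem tells us that exactly one of two mutually supporting situations occurs: either $\zeta(B) \subseteq S$, or else $S$ is an ideal of $B$ with $B/S$ abelian. The first case already gives one of the two claimed alternatives, so the work is to show that in the second case we obtain $\partial(B) \subseteq S$. This is where I would invoke the extremal property of the derived ideal $\partial(B) = [B,B]$: since $S$ is an ideal with $B/S$ abelian, and $B/\partial(B)$ is (as noted in the discussion following the derived series) the greatest abelian quotient of $B$, it follows that the canonical projection $B \to B/S$ factors through $B/\partial(B)$, forcing $\partial(B) \subseteq S$. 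Concretely, $B/S$ abelian means $[B/S, B/S] = 0$, and since the commutator is compatible with quotients this gives $[B,B] \subseteq S$, i.e. $\partial(B) \subseteq S$.

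Thus for each individual maximal subbrace $S$ we have established the dichotomy $\zeta(B) \subseteq S$ or $\partial(B) \subseteq S$, which is precisely the first assertion. I would state this cleanly as: for every maximal subbrace $S$, at least one of $\zeta(B) \subseteq S$ or $\partial(B) \subseteq S$ holds.

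For the final assertion, I want to show $\zeta(B) \cap \partial(B) \subseteq \Phi(B) = \bigcap_S S$, the intersection being taken over all maximal subbraces $S$ of $B$ (if $B$ has no maximal subbraces, then $\Phi(B) = B$ and the inclusion is trivial). Here the two alternatives conspire favourably: whichever horn of the dichotomy holds for a given $S$, we get $\zeta(B) \cap \partial(B) \subseteq S$. Indeed, if $\zeta(B) \subseteq S$ then $\zeta(B) \cap \partial(B) \subseteq \zeta(B) \subseteq S$, and symmetrically if $\partial(B) \subseteq S$ then $\zeta(B) \cap \partial(B) \subseteq \partial(B) \subseteq S$. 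Since this holds for every maximal subbrace $S$, intersecting over all such $S$ yields $\zeta(B) \cap \partial(B) \subseteq \Phi(B)$, as required.

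I do not anticipate a serious obstacle: the argument is essentially the brace-theoretic transcription of the classical group-theoretic fact that $Z(G) \cap G' \leq \Phi(G)$, and all the machinery is already in place. The one point demanding a little care is the justification that $B/S$ abelian forces $\partial(B) \subseteq S$; I would make sure this rests on the established universality of $B/\partial(B)$ as the maximal abelian quotient rather than on any ad hoc commutator manipulation, so that the implication is clean and does not require re-deriving properties of the commutator $[\,\cdot\,,\cdot\,]$ from scratch.
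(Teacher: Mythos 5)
Your proposal is correct and follows essentially the same route the paper intends: the corollary is stated there without proof as an immediate consequence of Theorem~\ref{teo:max-centvprime}, and your dichotomy---either $\zeta(B)\subseteq S$, or $S$ is an ideal with $B/S$ abelian, whence $\partial(B)\subseteq S$ because $B/\partial(B)$ is the greatest abelian quotient (equivalently, by Proposition~\ref{prop:central-commut}, $B/S$ abelian forces $[B,B]\subseteq S$)---is exactly the intended argument. The concluding intersection over all maximal subbraces, with the degenerate case $\Phi(B)=B$ handled trivially, is likewise the standard step, so nothing is missing.
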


However, in general the Frattini subbrace of a brace is not an ideal, not even a left ideal.

\begin{teorema}
There exists a brace $B$ such that $\Phi(B)$ is not a left ideal.
\end{teorema}
\begin{proof}
Let
\begin{align*}
  (K,{+})&=\langle a,b\mid 5a=4b=0, b+a-b=3a\rangle,\\
  (C,{\cdot})&=\langle c,d \mid c^5=d^4=1, dcd^{-1}=c^3\rangle
\end{align*}
be two  Frobenius groups of order~$20$. Observe that $C$ acts on $K$ via
\begin{align*}
  c(a)&=a,&d(a)&=3a,\\
  c(b)&=2a+b,&d(b)&=b.
\end{align*}

Consider $G=[K]C$  the semidirect product of $K$ by $C$ with respect to this action. Let $D=\langle ac, bd\rangle$, then $G=KD=DC$, $D\cap C=D\cap K=\{1\}$ and so we can define a structure of a brace $B$ associated with this triply factorised group. The corresponding bijective $1$-cocycle $\delta\colon C\longrightarrow K$ is described in Table~\ref{tab-der-20}.
\begin{table}[htbp]
  \centering
  \caption{Bijective $1$-cocycle associated with the brace of order~$20$}
  \label{tab-der-20}
  \medskip

  $\begin{array}{cccccccccc}
     c&\delta(c)&c&\delta(c)&c&\delta(c)&c&\delta(c)&c&\delta(c)\\\hline
    1&0&c&a&c^2&2a&c^3&3c&c^4&4c\\
    d&b&cd&3a+b&c^2d&a+b&c^3d&4a+b&c^4d&2a+b\\
    d^2&2b&cd^2&4a+2b&c^2d^2&3a+2b&c^3d^2&2a+2b&c^4d^2&a+2b\\
    d^3&3b&cd^3&2a+3b&c^2d^3&4a+3b&c^3d^3&a+3b&c^4d^3&3a+3b\\\hline
  \end{array}$
\end{table}

We prove by induction that $\delta(c^id^j)=3^jia+jb$. First of all, we prove by induction that $j(2a+b)=(3^j-1)a+jb$: the result is true for $j=0$, and if $j(2a+b)=(3^j-1)a+jb$, then $(j+1)(2a+b)=(2a+b)+(3^j-1)a+jb=2a+3(3^j-1)a+(j+1)b=(3^{j}-1)a+jb$. The result is true for $i=0$, as $\delta(d)=b$ and $d(b)=b$. Suppose that $\delta(c^id^j)=3^jia+jb$. Consequently, $\delta(c^{i+1}d^j)=\delta(c)+c(\delta(c^id^j))=a+c(3^jia+jb)=a+3^jia+c(jb)=a+3^jia+j(2a+b)=a+3^jia+(3^j-1)a+jb=3^j(i+1)a+jb$.

Since the subgroup of index~$2$ of $K$ is characteristic in~$K$, we have that this subgroup corresponds to a maximal subbrace of~$B$. This is the unique maximal subbrace of~$B$ of order divisible by~$5$.

Suppose now that a subbrace of $B$ has order not divisible by~$5$. We have that $\langle b\rangle\le K$, that induces $\langle d\rangle\le C$, leads to a subbrace of $B$ that corresponds to both a maximal subgroup of $K$ and a maximal subgroup of~$C$. Consequently, $\langle b\rangle\le K$ induces a maximal subbrace of~$B$. Now suppose that $c^id$ belongs to the multiplicative group $E$ of a subbrace of $B$. Then $\delta(c^id)=3ia+b$ belongs to the additive group $L$ of that subbrace. Consequently, $c^idc^id=c^ic^{3i}d^2=c^{4i}d^2$ belongs to $E$ and is its unique element of order~$2$. Furthermore $\delta(c^{4i}d^2)=9\cdot 4ia+2b=ia+2b\in L$ is its unique element of order~$2$, but it must coincide with $(3ia+b)+(3ia+b)=3ia+9ia+2b=12ia+2b=2ia+2b$. It follows that $ia=0$, that is, $5\mid i$. Consequently, no element of order~$4$ apart from $d$ and $d^3$ can belong to the multiplicative group of the subbrace. Now suppose that we have a subbrace of order~$2$. The element of order~$2$ of the multiplicative subgroup $E$ must be of the form $c^id^2$, and so $\delta(c^id^2)=9ia+2b=4ia+2b$ is the element of order~$2$ of its additive  subgroup~$L$. But then the corresponding subbrace is not maximal, as $E$ is contained in $\langle c, d^2\rangle$ and $L$ is contained in $\langle a, 2b\rangle$.

It follows that the unique maximal subbraces of $B$ are the ones corresponding to the additive subgroups $\langle a, 2b\rangle$ and $\langle b\rangle$. Thus, $\Phi(B)$ corresponds to the subgroup $\langle 2b\rangle\le K$, associated with $\langle d^2\rangle \le C$. But $\langle 2b\rangle$ is not invariant under the action of $C$, because $c(2b)=2a+b+2a+b=8a+2b=3a+2b\notin \langle 2b\rangle$. Therefore, $\Phi(B)$ is not a left ideal of~$B$.

This brace corresponds to \texttt{SmallSkewbrace(20, 21)} of the \textsf{YangBaxter} library \cite{VendraminKonovalov23-YangBaxter-0.10.3} of \textsf{GAP} \cite{GAP4-13-1}.
\end{proof}

In \cite{BallesterEstebanPerezC24-JH} it is shown that chief factors of braces turn out to play a key role in the ideal structure of braces. Let $I$ and $J$ be ideals of a brace $B$ such that $J \subseteq I$. $I/J$ is said to be a \emph{chief factor} of $B$ if $I/J$ is a minimal ideal of $B/J$. A series of ideals
\[ 0 = I_0 \subseteq I_1 \subseteq \ldots \subseteq I_n = B\]
is said to be a \emph{chief series} if $I_j/I_{j-1}$ is a chief factor of $B$, for every $1 \leq j \leq n$.  A chief factor $I/J$ is a \emph{Frattini chief factor} of $B$ if $I/J \subseteq \Phi(B/J)$ and a \emph{complemented chief factor} of $B$ if it is complemented in~$B/J$.

Observed that if $I$ is a minimal ideal of a soluble brace $B$, then and so $\partial_B(I)$ is an ideal of $B$ properly contained in $I$. Therefore $\partial_B(I) = 0$ and $I$ is abelian. Consequently every chief factor of a soluble brace $B$ is abelian. 

\begin{lema}
\label{lema:max<->factppal}
Let $S$ and $I$ be a subbrace and an ideal of a brace~$B$ respectively. Assume that $I$ is an abelian brace and $B = SI = S+I$. Then 
\begin{enumerate}
\item The subbrace $I\cap S$ is an ideal of~$B$.
\item $S$ is a maximal subbrace if, and only if, $I/I\cap S$ is a chief factor of~$B$. 
\end{enumerate}
\end{lema}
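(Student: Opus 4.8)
The plan is to treat the two assertions in turn, the engine in both being that an abelian brace is trivial as a brace and has abelian group structure.

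For part~(1), I would first record two consequences of $[I,I]=0$. Since $I$ is a trivial brace, $ij=i+j$ for all $i,j\in I$, whence $\lambda_i(j)=-i+ij=j$, so $\lambda_i$ fixes $I$ pointwise; and $(I,\cdot)=(I,+)$ is abelian, so $iji^{-1}=j$ for $i,j\in I$. Now write an arbitrary $b\in B$ multiplicatively as $b=si$ with $s\in S$, $i\in I$ (possible as $B=SI$). Because $\lambda$ is a homomorphism, $\lambda_b=\lambda_s\lambda_i$, and $\lambda_i$ is trivial on $I$; hence for $x\in I\cap S$ we get $\lambda_b(x)=\lambda_s(x)=-s+sx\in S$, while $\lambda_b(x)\in I$ since $x\in I$ and $I$ is a left ideal. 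Thus $I\cap S$ is a left ideal. The same bookkeeping with multiplicative conjugation gives $bxb^{-1}=s(ixi^{-1})s^{-1}=sxs^{-1}\in I\cap S$, so $(I\cap S,\cdot)\trianglelefteq(B,\cdot)$. For additive normality I would instead write $b=s+i'$ with $s\in S$, $i'\in I$, and use normality of $(I,+)$ in $(B,+)$ together with commutativity of $(I,+)$ to collapse $-b+x+b$ to $-s+x+s$, which lies in $I\cap S$. Together these three facts make $I\cap S$ an ideal of~$B$.

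For part~(2), the idea is to set up a Dedekind-type correspondence. I claim the assignments $T\mapsto T\cap I$ and $J\mapsto S+J=SJ$ are mutually inverse, inclusion-preserving bijections between the subbraces $T$ with $S\subseteq T\subseteq B$ and the ideals $J$ of $B$ with $I\cap S\subseteq J\subseteq I$. That $S+J$ is a subbrace of $B$ lying between $S$ and $B=SI$ is Proposition~\ref{prop:product_id-subbr}; that $T\cap I$ is an ideal of $B$ between $I\cap S$ and $I$ follows from part~(1) applied to $T$, noting $B=SI\subseteq TI\subseteq B$, so $B=TI$ and $I$ is still abelian. The two round trips, $S+(T\cap I)=T$ and $(S+J)\cap I=J$, are exactly the modular law for additive subgroups (using $S\subseteq T$ in the first, and $S\cap I\subseteq J\subseteq I$ in the second).

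Since these maps restrict to an order isomorphism of the interval $[S,B]$ of subbraces onto the interval $[I\cap S,I]$ of ideals of $B$, sending $S\mapsto I\cap S$ and $B\mapsto I$, there is a subbrace strictly between $S$ and $B$ precisely when there is an ideal of $B$ strictly between $I\cap S$ and $I$. Hence $S$ is a maximal subbrace if and only if $I/(I\cap S)$ is a minimal ideal of $B/(I\cap S)$, that is, a chief factor of $B$; here the nontriviality $I\cap S\neq I$ matches $S\neq B$. I expect the genuine crux to be part~(1): as $S$ is only a subbrace, neither $\lambda_b$ nor multiplicative/additive conjugation need preserve $S$ a priori, and the whole argument hinges on the abelianness of $I$ forcing the $I$-component of $b$ to act trivially, so that everything reduces to the action of the $S$-component, which $S$ absorbs. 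In part~(2) the only point to watch is that $T\cap I$ be an ideal of \emph{all} of $B$, which is precisely what reusing part~(1) guarantees.
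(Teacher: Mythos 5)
Your proof is correct and follows essentially the same route as the paper: in part (1) you decompose $b$ multiplicatively and additively and use abelianness of $I$ to kill the $I$-component, and in part (2) you reuse part (1) for intermediate subbraces together with Proposition~\ref{prop:product_id-subbr} and the Dedekind modular law, exactly the ingredients of the paper's argument. The only cosmetic differences are that you verify multiplicative normality of $I\cap S$ directly where the paper instead checks $(I\cap S)\ast B\subseteq I\cap S$ (equivalent by the characterisation of ideals recalled in Section~\ref{sec:prelim}), and that you package the two implications of (2) as a single order isomorphism between the interval $[S,B]$ of subbraces and the interval $[I\cap S,I]$ of ideals of $B$, whereas the paper runs the two directions separately with the same computations.
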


\begin{proof}
Let $b\in B$ and $x \in I \cap S$. Since $B = SI$, we can write $b = sy$ for certain $s\in S$ and $y\in I$. Then
\[ \lambda_b(x) = \lambda_{sy}(x) = \lambda_s(\lambda_y(x)) = \lambda_s(x) \in S \cap I\]
Analogously, $B = S + I$ and if we write $b = s'+y'$, for certain $s'\in S$ and $y'\in I$, it holds
\begin{align*}
 x\ast b & = x \ast (s'+y') = -x + x(s'+y') - y' -s' = \\
 & = -x + xs' -x + xy' - y' - s' =  \ \text{($I$ is trivial)}\\
 & = -x + xs' - x + x + y' - y' - s' = x \ast s' \in I \cap S
\end{align*}
Since $I$ is abelian and $B = S + I$, it follows that $I \cap S$ is a normal subgroup of $(B, +)$. Therefore $I\cap S$ is an ideal of~$B$, as required. 

Assume that $S$ is a maximal subbrace of $B$, and let $L$ be an ideal of $B$ such that $I \cap S \subsetneq L \subseteq I$. By Proposition~\ref{prop:product_id-subbr}, $SL$ is a subbrace and $B = SL$, as $L$ is not contained in $S$. Therefore 
$I = SL \cap I = (S \cap I)L = L$. Hence $I/S\cap I$ is a chief factor of $B$.

Conversely, assume that $I/I\cap S$ is a chief factor and let $T$ be a subbrace of $G$ such that $S\subsetneq T \subseteq B$. Then  $S \subsetneq T = T \cap SI = S(T\cap I)$. Arguing as above, it follows that $I \cap T$ is an ideal of  $B$, and $S \cap I \subsetneq T \cap I \subseteq I$. Therefore $T\cap I = I$ and then $T = SI = B$. Hence $S$ is a maximal subbrace of $B$. 
\end{proof}

\begin{corolari}
\label{cor:soluble_minimal-compted-ideal}
Let $B$ be a soluble brace and $I$ be a minimal ideal of $B$. Suppose that there exists a subbrace $S$ such that $B = IS = I +S$ and  $\zeta(B)$ is not contained in $S$.  Then $S$ is an ideal and $B$ is isomorphic to the direct product $I \times S$.
\end{corolari}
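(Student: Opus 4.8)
The plan is to reduce everything to the machinery already built for maximal subbraces, so that the centre hypothesis $\zeta(B)\not\subseteq S$ can be fed into Theorem~\ref{teo:max-centvprime}. First I would record that $I$ is abelian: being a minimal ideal of a soluble brace, $I$ is soluble, so $\partial(I)$ is an ideal of $B$ properly contained in $I$, whence $\partial(I)=0$. Now $I$ is abelian and $B=SI=S+I$, so Lemma~\ref{lema:max<->factppal}(1) applies and $I\cap S$ is an ideal of $B$ contained in $I$. By minimality $I\cap S\in\{0,I\}$, and the value $I$ is impossible: $I\cap S=I$ would give $I\subseteq S$, hence $B=SI=S$ and $\zeta(B)\subseteq B=S$, against the hypothesis. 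Thus $I\cap S=0$, and since $I=I/(I\cap S)$ is a chief factor of $B$ (a minimal ideal of $B=B/0$), Lemma~\ref{lema:max<->factppal}(2) tells me that $S$ is a \emph{maximal} subbrace of $B$.

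The decisive step is to promote $S$ to an ideal, and this is where $\zeta(B)\not\subseteq S$ enters. By Proposition~\ref{prop:product_id-subbr} the set $S+\zeta(B)=S\zeta(B)$ is a subbrace of $B$ strictly larger than $S$, so maximality forces $B=S+\zeta(B)$. This is exactly the configuration analysed in Theorem~\ref{teo:max-centvprime}, and the portion of its proof establishing that $S$ is an ideal uses only that $B=S+\zeta(B)$ together with the central behaviour of $\zeta(B)$ (every $z\in\zeta(B)$ satisfies $\lambda_z=\operatorname{id}$ and is additively central), and no finiteness. Writing $b\in B$ as $b=sz=s+z$ with $s\in S$, $z\in\zeta(B)$, one checks $\lambda_b(x)=\lambda_s(x)\in S$ and $x\ast b=x\ast s\in S$ for all $x\in S$, and that $(S,+)$ is normal in $(B,+)$; hence $S$ is an ideal of $B$. (Equivalently, one may invoke Theorem~\ref{teo:max-centvprime} directly, its alternative $\zeta(B)\subseteq S$ being excluded by hypothesis.)

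Finally I would assemble the direct product. At this point $I$ and $S$ are both ideals with $I\cap S=0$ and $B=IS=I+S$, so $(I,+),(S,+)$ and $(I,\cdot),(S,\cdot)$ are pairs of normal subgroups with trivial intersection generating $(B,+)$ and $(B,\cdot)$ respectively; in particular elements of $I$ and $S$ commute for both operations. Because both factors are left ideals with $I\cap S=0$, a short computation using these commutations shows $\lambda_i(s)-s\in I\cap S=0$ for $i\in I$, $s\in S$, so $is=i+\lambda_i(s)=i+s$. Thus the additive and multiplicative decompositions of an element agree, and the map $I\times S\to B$, $(i,s)\mapsto i+s=is$, is a bijective homomorphism for both operations, i.e.\ a brace isomorphism $B\cong I\times S$.

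The main obstacle I anticipate is the middle step: it is precisely the centre condition $\zeta(B)\not\subseteq S$ that prevents $S$ from being a non-normal maximal subbrace, and some care is needed to observe that the relevant half of Theorem~\ref{teo:max-centvprime} (that $S$ is an ideal) is finiteness-free, since the corollary is stated for arbitrary soluble braces. The first and last steps are then routine applications of Lemma~\ref{lema:max<->factppal} and of the standard internal-direct-product criterion, the only point worth spelling out being the identity $\lambda_i(s)=s$ that forces $i+s=is$.
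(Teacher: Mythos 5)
Your proof is correct and follows essentially the same route as the paper's: Lemma~\ref{lema:max<->factppal} gives that $I\cap S$ is an ideal, hence $I\cap S=0$ by minimality, and that $S$ is a maximal subbrace; the hypothesis $\zeta(B)\not\subseteq S$ then promotes $S$ to an ideal via Theorem~\ref{teo:max-centvprime}, and the internal direct product decomposition follows. Your caution about finiteness is well placed and is the one point where you improve on the paper: the paper cites Theorem~\ref{teo:max-centvprime} directly even though that theorem is stated only for \emph{finite} braces while the corollary carries no finiteness hypothesis, and, exactly as you observe, the portion of its proof showing that $S$ is an ideal (using only $B=S+\zeta(B)$, the identities $\lambda_z=\operatorname{id}$ and additive centrality of $z\in\zeta(B)$) never uses finiteness, so your spelled-out version closes this small gap rather than merely reproducing the citation.
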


\begin{proof}
We have that $I$ is abelian. Then, by Lemma~\ref{lema:max<->factppal}, $I \cap S$ is an ideal of $B$ and, thus, $I\cap S = 0$. Lemma~\ref{lema:max<->factppal} also yields $S$ is a maximal subbrace of $B$. By Theorem~\ref{teo:max-centvprime}, $S$ is  an ideal of $B$ as $\zeta(B)\nsubseteq S$. Hence $B$ is isomorphic to the direct product $I \times S$.
\end{proof}

\begin{lema}
\label{lema:abelian_chf}
Let $I/J$ be an abelian chief factor of a brace $B$. Then $I/J$ is either a Frattini chief factor or a complemented chief factor of $B$. If $B$ is finite, then every complement of $I/J$ is a maximal subbrace of~$B/J$.
\end{lema}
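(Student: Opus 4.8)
The plan is to reproduce the classical dichotomy for a minimal normal subgroup of a group — that it is either contained in the Frattini subgroup or admits a complement — in the brace setting, with Lemma~\ref{lema:max<->factppal} doing the structural work. First I would reduce to the case $J = 0$ by passing to $B/J$, so that $I$ becomes an abelian minimal ideal of $B$ and the two claims read: either $I \subseteq \Phi(B)$, or $I$ is complemented in $B$; and, when $B$ is finite, every complement of $I$ is a maximal subbrace.

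For the dichotomy I would distinguish according to whether $I \subseteq \Phi(B)$. If so, then by definition $I/J$ is a Frattini chief factor and nothing remains. Otherwise $\Phi(B) \neq B$, so $B$ has maximal subbraces, and since $\Phi(B)$ is their intersection there is a maximal subbrace $M$ with $I \not\subseteq M$. By Proposition~\ref{prop:product_id-subbr} the set $MI = M + I$ is a subbrace, and it strictly contains $M$ because $I \not\subseteq M$; maximality then forces $B = MI = M + I$. Now Lemma~\ref{lema:max<->factppal}(1) applies (as $I$ is abelian and $B = MI$) and tells me that $M \cap I$ is an ideal of $B$; it lies inside the minimal ideal $I$ and is proper (else $I \subseteq M$), so minimality gives $M \cap I = 0$. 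Thus $M$ satisfies $B = M + I$ and $M \cap I = 0$, i.e. it is a complement of $I$, and $I/J$ is a complemented chief factor.

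For the final assertion, I would take an arbitrary complement $T$ of $I$, so $B = IT = I + T$ and $I \cap T = 0$. Since $I$ is abelian and $B = TI = T + I$, I can invoke Lemma~\ref{lema:max<->factppal}(2) with the subbrace $S = T$: it states that $T$ is a maximal subbrace exactly when $I/(I\cap T)$ is a chief factor of $B$. Here $I \cap T = 0$, so this factor is $I$ itself, which is a chief factor precisely because $I$ is a minimal ideal. Hence $T$ is a maximal subbrace of $B/J$.

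The steps are almost entirely formal once Lemma~\ref{lema:max<->factppal} is available; the only points demanding attention are the legitimacy of the identifications $MI = M+I$ and $TI = T+I$ (which hold because $I$ is an ideal, via Proposition~\ref{prop:product_id-subbr}), and the use of minimality of $I$ to collapse the proper ideals $M \cap I$ and $I \cap T$ to $0$. I do not expect a genuine obstacle here: indeed the maximality equivalence in Lemma~\ref{lema:max<->factppal}(2) does not itself use finiteness, so the finiteness hypothesis in the last sentence is only inherited from the ambient theorem rather than being essential to this argument.
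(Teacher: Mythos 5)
Your proof is correct, and for the dichotomy it follows the paper's route exactly: reduce to $J=0$, pick a maximal subbrace $M$ with $I \not\subseteq M$, obtain $B = MI = M+I$ via Proposition~\ref{prop:product_id-subbr} and maximality, and collapse the proper ideal $M \cap I$ to $0$ by minimality of $I$. Where you genuinely diverge is the final assertion. The paper, given an arbitrary complement $T$ of $I$, uses finiteness to choose a maximal subbrace $S'$ containing $T$, notes $I \not\subseteq S'$, reruns the first-part argument to conclude that $S'$ is also a complement of $I$, and then deduces $S' = T$ from $T \subseteq S'$, $B = I + T$ and $I \cap S' = 0$. You instead read Lemma~\ref{lema:max<->factppal}(2) in the reverse direction: since $B = TI = T+I$ with $I$ abelian and $I \cap T = 0$, the factor $I/(I\cap T) = I$ is a chief factor of $B$ (being a minimal ideal), so $T$ is maximal. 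This is shorter, it exploits the full equivalence in Lemma~\ref{lema:max<->factppal}(2) where the paper only invokes the ``maximal $\Rightarrow$ chief factor'' direction, and --- as you observe --- it shows that the finiteness hypothesis in the last sentence of the statement is superfluous: Lemma~\ref{lema:max<->factppal} carries no finiteness assumption, and finiteness enters the paper's argument only through the detour of embedding $T$ in a maximal subbrace.
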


\begin{proof}
Without loss of generality we can assume that $J = 0$ and $I$ is a minimal ideal of $B$. Suppose that $I$ is not a Frattini chief factor of $B$. Then, there exists a maximal subbrace $S$ such that $I$ is not contained in $S$. Thus, $B = IS = I+S$. By Lemma~\ref{lema:max<->factppal}, $I \cap S$ is an ideal of $B$ and $I/I\cap S$ is a chief factor of $B$. Therefore $I \cap S = 0$ so $I$ is complemented in $B$ by the maximal subbrace~$S$. 

Assume that $B$ is finite and $T$ is another complement of $I$. Then $T$ is a proper subbrace of $B$. Let $S'$ be a maximal subbrace of $B$ with $T \subseteq S'$. Note that $I$ is not contained in $S'$, as otherwise $T$ would not be a complement of $I$. Arguing as above, it follows that  $S'$ also complements $I$. Hence $S' = T$.
\end{proof}

\begin{proof}[Proof of Theorem~\ref{teo:soluble-chief^maximal}]
Let $I/J$ be a chief factor of $B$. Since $B$ is soluble, we have that $I/J$ is abelian. By Lemma~\ref{lema:abelian_chf}, $I/J$ is either a Frattini chief factor or a complemented chief factor of $B$.

Suppose that $B$ is finite. Without loss of generality we can assume that $J = 0$ and then $I$ is a minimal ideal of $B$. Note that $I$ is an abelian trivial brace, i.e. $(I,+) = (I, \cdot)$ is an abelian group. Let $H$ be a characteristic subgroup of $I$. Then, $(H,+) = (H,\cdot)$ is a normal subgroup of both $(B,+)$ and $(B,\cdot)$, because both $(I,+)$ and $(I,\cdot)$ are normal subgroups of $(B,+)$ and $(B,\cdot)$, respectively. Since $I$ is an ideal, it holds that $\lambda_b|_I \in \Aut(I,+)$ and, therefore, $\lambda_b (H) \subseteq H$ for every $b\in B$. Consequently, $H$ is an ideal of $B$. By the minimality of $I$, either $H = 0$ or $H = I$. Therefore $I$ is a characteristically simple group. Since $I$ is abelian, it follows that $I$ is an elementary abelian $p$-group for some prime $p$. 

Let $S$ be a maximal subbrace of $B$ and let
\[ I_0 = 0 \subseteq I_1 \subseteq \ldots \subseteq I_n = B\]
be a chief series of $B$, with $I_i/I_{i-1}$  an abelian brace for every $1\leq i \leq n$. Then, there exists $1 \leq i_0 \leq n$ such that $SI_{i_0} = S+I_{i_0}= B$ and $SI_{i_0-1} = S + I_{i_0-1} = S$. Thus, $S/I_{i_0-1}$ is a maximal subbrace of $B/I_{i_0-1}$ and $I_{i_0}/I_{i_0-1}$ is a trivial brace such that $(S/I_{i_0-1})(I_{i_0}/I_{i_0-1}) = B/I_{i_0-1}$. According to Lemma~\ref{lema:max<->factppal}, it follows that $I_{i_0}/(S\cap I_{i_0})$ is a chief factor, which is  an elementary abelian $p$-group by the previous statement. Thus $I_{i_0-1} = S \cap I_{i_0}$. Hence $|B:S| = |I_{i_0}/I_{i_0}\cap S| $ is a power of $p$, as claimed. 
\end{proof}

\section{Soluble solutions of the YBE}

This section is devoted to prove Theorems~\ref{teo:soluble_brace<->soluble_sol}~and~\ref{teo:struct_brace_soluble->soluble} which characterise  solubility of solutions by means of solubility of braces. Applications to multipermutation solutions are also provided.

The following well-known result describes the relation between solutions of the YBE and braces.

\begin{teorema}[{\cite[Theorems 3.1 and 3.9]{GuarnieriVendramin17}}]
\label{teo:nodeg<->braces}
Let $B$ be a brace. Then
\[ r_B\colon B\times B \rightarrow B \times B,\quad r(a,b) = (\lambda_a(b), \lambda_a(b)^{-1}ab), \quad \forall\,  a,b\in B.\]
provides a solution on $B$. On the other hand, given $(X,r)$ a solution, there exists a brace structure on $G(X,r) = \langle x \in X\,|\, xy = uv,\, \text{if $r(x,y) = (u,v)$}\rangle$ such that its associated solution $r_G$ satisfies $r_G(\iota \times \iota) = (\iota \times \iota)r$, where $\iota\colon X \rightarrow G(X,r)$ is the canonical map.
\end{teorema}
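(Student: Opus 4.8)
The statement splits into two independent halves, which I would treat in turn. For the implication from braces to solutions, fix a brace $B$ and, given $a,b\in B$, write $r_B(a,b)=(u,v)$ with $u=\lambda_a(b)$ and $v=u^{-1}ab$. My first step is the fundamental identity $uv=ab$ in $(B,\cdot)$, immediate from the definition of $v$. Combining it with $u=\lambda_a(b)=-a+ab$ gives $a+u=ab=uv$, whence $a=uv-u$ and then $b=a^{-1}(uv)$; this produces a two-sided inverse of $r_B$ in closed form and settles bijectivity. For non-degeneracy I would check the two families of partial maps separately: the first-component maps $b\mapsto\lambda_a(b)$ are the additive automorphisms $\lambda_a$ and hence bijective, while the bijectivity of the second-component maps $a\mapsto v$ I would obtain directly from the brace axioms by exhibiting their inverses, rewriting $v=\lambda_a(b)^{-1}ab$ through \eqref{eq:producte-lambda} and \eqref{eq:distbraces}.

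The core of this half is the braid relation $r_{12}r_{23}r_{12}=r_{23}r_{12}r_{23}$ on $B^{3}$. I would verify it by recording $r_B$ as $r_B(a,b)=(\sigma_a(b),\tau_b(a))$ with $\sigma_a=\lambda_a$ and $\tau_b(a)=\lambda_a(b)^{-1}ab$, and then checking the three Etingof--Schedler--Soloviev compatibility conditions for the pair $(\sigma,\tau)$. The only structural inputs are that $\lambda\colon(B,\cdot)\to\Aut(B,+)$ is a homomorphism, i.e.\ $\lambda_{ab}=\lambda_a\lambda_b$, together with distributivity \eqref{eq:distbraces} and the conversion formulas \eqref{eq:producte-lambda}. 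For instance the first condition $\sigma_a\sigma_b=\sigma_{\sigma_a(b)}\sigma_{\tau_b(a)}$ collapses, using $uv=ab$, to the chain $\lambda_a\lambda_b=\lambda_{ab}=\lambda_{uv}=\lambda_u\lambda_v$, which is exactly the homomorphism property; the remaining two conditions follow by the same bookkeeping, the guiding principle being that along both sides of the braid relation the total product $abc$ of $(B,\cdot)$ is preserved, so matching the three output coordinates reduces to repeated application of $\lambda_{ab}=\lambda_a\lambda_b$ and $uv=ab$. I expect this to be the most delicate point of the first half, and I would keep every identity phrased in terms of $\lambda$ and the multiplicative product, avoiding brute additive expansion.

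For the implication from solutions to braces, let $(X,r)$ be a solution, let $(G,\cdot)=G(X,r)$ be its structure group and $\iota\colon X\to G$ the canonical map, and write $r(x,y)=(\sigma_x(y),\tau_y(x))$. Since the maps $\sigma_x\in\Sym(X)$ respect the defining relations of $G$, the assignment $x\mapsto\sigma_x$ extends along $\iota$ to a homomorphism $(G,\cdot)\to\Sym(X)$, the candidate left action $\lambda$. The substantial step is to endow $G$ with a second group operation $+$ making $(G,+,\cdot)$ a brace whose associated $\lambda$-map is precisely this action. I would carry this out through the bijective $1$-cocycle picture of Etingof, Schedler and Soloviev~\cite{EtingofSchedlerSoloviev99} and Lu, Yan and Zhu: one shows that $G$ carries a bijective $1$-cocycle onto the free additive structure generated by $X$, which transports an additive group law back to $G$; equivalently, via Proposition~\ref{prop:brida-holomorph}, one realises $(G,\cdot)$ as a regular subgroup of $\Hol(G,+)$ through $g\mapsto(g,\lambda_g)$, and Proposition~\ref{prop:brida-holomorph} then returns the brace structure.

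It remains to verify the compatibility $r_G(\iota\times\iota)=(\iota\times\iota)r$. By construction $\lambda_{\iota(x)}$ extends $\sigma_x$, so for $x,y\in X$ with $r(x,y)=(u,v)$ one has $\lambda_{\iota(x)}(\iota(y))=\iota(\sigma_x(y))=\iota(u)$; moreover $\iota(x)\iota(y)=\iota(u)\iota(v)$ is precisely a defining relation of $G(X,r)$, so the second coordinate $\lambda_{\iota(x)}(\iota(y))^{-1}\iota(x)\iota(y)$ simplifies to $\iota(u)^{-1}\iota(u)\iota(v)=\iota(v)$, giving $r_G(\iota(x),\iota(y))=(\iota(u),\iota(v))=(\iota\times\iota)r(x,y)$. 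The main obstacle of the whole proof is the construction in the second half: establishing that the additive operation on the presented, and possibly infinite, group $G(X,r)$ is well defined and that the associated cocycle is bijective --- equivalently, that $g\mapsto(g,\lambda_g)$ is a well-defined and regular embedding into $\Hol(G,+)$ --- is the genuinely non-formal ingredient, whereas the braid relation and the final compatibility check are, once organised as above, essentially bookkeeping.
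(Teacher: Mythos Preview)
The paper does not prove this theorem at all: it is stated as a citation of \cite[Theorems~3.1 and~3.9]{GuarnieriVendramin17} and is introduced with the words ``We state the known result that describes the relation between solutions of the YBE and braces.'' There is therefore no proof in the paper against which to compare your proposal.

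That said, your sketch is a reasonable outline of the standard argument in the literature. The verification of the braid relation via the Etingof--Schedler--Soloviev conditions and the ``$uv=ab$'' bookkeeping is exactly how Guarnieri and Vendramin organise the first half, and your treatment of the compatibility $r_G(\iota\times\iota)=(\iota\times\iota)r$ is correct. You are also right to flag the construction of the additive structure on $G(X,r)$ as the genuinely non-formal step; in the cited reference this is done not through Proposition~\ref{prop:brida-holomorph} directly but by building the additive group as the free abelian (or free) group on $X$ and transporting the multiplication along the bijective $1$-cocycle, so your alternative via the holomorph would need an independent construction of $(G,+)$ before it can be invoked. But since the paper itself offers no argument here, there is nothing further to compare.
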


\begin{nota}
\label{nota:lambda^G}
According to Theorem~\ref{teo:nodeg<->braces}, it follows that $(X,r)$ is a trivial solution if, and only if, the structure brace $G(X,r)$ is abelian isomorphic to $\mathbb{Z}^X$. In particular, $B$ is an abelian brace if, and only if, its associated solution $(B,r_B)$ is trivial.
\end{nota}

Recall that a homomorphism $f \colon (X,r) \rightarrow (Y,s)$ between solutions is a map $f\colon X \rightarrow Y$ satisfying $s(f\times f) = (f\times f)r$. Following~\cite{CedoOkninski21}, every homomorphism between solutions $f \colon (X,r) \rightarrow (Y,s)$ extends to a brace homomorphism between structure braces associated $\bar{f} \colon G(X,r) \rightarrow G(Y,s)$.

\begin{definicio}
Let $(X,r)$ be a solution. Suppose that $Y\subseteq X$ satisfies that $r(Y\times Y) = Y \times Y$. We denote $r|_Y$ the restriction of $r$ to $Y\times Y$ and we say that $(Y,r_Y)$ is a \emph{subsolution} of $(X,r_X)$.
\end{definicio}

\begin{proof}[Proof of Theorem~\ref{teo:soluble_brace<->soluble_sol}]

Let $B$ be a soluble brace and let
\[B = I_0 \supseteq I_1 \supseteq \ldots \supseteq I_n = 0\]
be an abelian ideal series of $B$. For every $1\leq k \leq n$, we take the canonical epimorphism $\rho_k\colon B \rightarrow B/I_k$. It turns out that the $\rho_k$ are epimorphisms between the associated solutions  $(B,r_B)$ and $(B/I_k, r_{B/I_k})$. Moreover,  $I_k = \Ker \rho_k \in B/\Ker\rho_k$ and 
\[ \rho_k(I_{k-1}, r_{B}|_{I_{k-1}}) = (I_{k-1}/I_k, r_{B/I_k}|_{I_{k-1}/I_k})\]
is a trivial subsolution, as $I_{k-1}/I_k$ is abelian. Therefore, $(B,r_B)$ is soluble at~$0$.

Now, assume that the associated solution $(B,r_B)$ of a brace $B$ is soluble at~$0$. Then, there exist a sequence of subsets
\[ X_t = \{0\} \subseteq X_{t-1} \subseteq X_1 \subseteq X_0 = B,\]
solutions $(Y_k, s_k)$, and epimorphisms $f_k\colon (B,r_B) \rightarrow (Y_k,s_k)$, such that $X_k\in B/\Ker f_k$ and $f_k(X_{k-1}, r_B|_{X_{k-1}})$ is a trivial subsolution of $(Y_k,s_k)$, for every $1\leq k \leq t$.

Fix $1\leq k \leq t$ and assume that $X_{k-1}$ is an ideal. There exists a brace epimorphism $\bar{f}_k\colon G(B,r_B) \rightarrow G(Y_k,s_k)$ which extends $f_k$, and by the universal property described in~\cite{CedoSmoktunowiczVendramin19}, there exists an epimorphism $g\colon G(B,r_B) \rightarrow B$ such that $g(b) = b$ for every $b\in B$. Thus, $B \cong G(B,r_B)/\Ker g$. Since $\bar{f}_k$ is an epimorphism, $\bar{f}_k(\Ker g)$ is an ideal of $G(Y_k,s_k)$, and we can consider the induced epimorphism of braces $\varphi_k\colon B \rightarrow G(Y_k,s_k)/\bar{f}_k(\Ker g)$ such that
\[\varphi_k(b) \coloneqq \bar{f}_k(b+\Ker g) = \bar{f}_k(b) + \bar{f}_k(\Ker g) = f_k(b) + \bar{f}_k(\Ker g)\ \text{for every $b\in B$}.\]

Observe that $\Ker g \cap B = 0$, and therefore, $\{f_k(0)\} = \bar{f}_k(\Ker g) \cap Y_k$. Since $0 \in X_k$ and $X_k \in B/\Ker f_k$, it follows that $X_k = \Ker \varphi_k$ is an ideal of~$B$. Moreover,  $f_k(X_{k-1},r_B|_{X_{k-1}})$ is a trivial subsolution of $(Y_k,s_k)$, which implies that $X_{k-1}/X_k$ is isomorphic to an abelian subbrace of $G(Y_k,s_k)/\bar{f}_k(\Ker g)$.

Hence, we have that
\[ 0 = X_t \subseteq X_{t-1} \subseteq \ldots \subseteq X_1 \subseteq X_0 = B\]
is an ideal series such that, for every $1\leq k \leq t$, $X_{k-1}/X_k$ is an abelian brace; that is, $B$ is a soluble brace.
\end{proof}

\begin{proof}[Proof of Theorem~\ref{teo:struct_brace_soluble->soluble}]
Let $(X,r)$ be a solution and assume that $G:= G(X,r)$ is  a soluble brace with an abelian ideal series
\[ G = I_0 \supseteq I_1 \supseteq \ldots \supseteq I_n = 0\]
such that $I_{n-1}\cap X \neq \emptyset$.  Then, for every $1 \leq k \leq n-1$, we take $X_{k} := I_k \cap X \neq \emptyset$ and $X_n := \{x_0\}$ for some $x_0\in X_{n-1}$. Thus, we obtain a chain of subsets
\[ X_n = \{x_0\} \subseteq X_{n-1} \subseteq \ldots \subseteq X_1 \subseteq X_0 = X\]

Fix $1 \leq k \leq  n-1$. Recall that the inclusion map $\iota \colon X \hookrightarrow G$ satisfies $r_G(\iota \times \iota) = (\iota\times \iota)r$, where $r_G$ is the associated solution with $G$. Then, the map $f_k\colon X \rightarrow G/I_k$, given by $f_k(x) = \iota(x)I_k$ provides an epimorphism between $(X,r)$ and $(G/I_k, r_{G/I_k})$. Observe that $X_k \in \Ker f_k$ and $f(X_{k-1},r|_{X_{k-1}}) = (I_{k-1}/I_k, r_{G/I_k}|_{I_{k-1}/I_k})$ is a trivial solution, as $I_{k-1}/I_k$ is an abelian brace.

Hence, we conclude that $(X,r)$ is soluble at $x$, for every $x\in I_{n-1}\cap X$.
\end{proof}

\bigskip

Theorems~\ref{teo:soluble_brace<->soluble_sol} and~\ref{teo:struct_brace_soluble->soluble} can by applied to the case of multipermutation solutions.

Let $(X,r)$ be a solution, given by $r(x,y) = (\lambda_x(y), \rho_y(x))$, for all $x,y\in X$. The so-called retraction relation $\sim$ on $X$ is defined as $x\sim y$ if $\lambda_x = \lambda_y$ and $\rho_x = \rho_y$.

If $[x]$ denotes the $\sim$-class of $x\in X$, then a natural induced solution $\Ret(X,r) = (X/\sim,\bar{r})$ called the \emph{retraction} of $(X,r)$ arises, where $\bar{r}$ is defined by
\[ \bar{r}([x], [y]) = ([\sigma_x(y)], [\tau_y(x)]), \quad \text{for all $[x], [y] \in X/ \sim$}.\]

We can iterate this process and define inductively
\begin{align*}
\Ret^1(X, r) & = \Ret(X,r),\\
\Ret^{n+1}(X,r) & = \Ret(\Ret^n(X,r)), \quad \text{for all $n\geq 1$.}
\end{align*}

A solution $(X,r)$ is said to be a \emph{multipermutation solution of level} $m$,  if $m$ is the smallest natural such that $\Ret^m(X,r)$ has cardinality 1.

Let $B$ be a brace. We define inductively the so-called \emph{socle series} as $
\Soc_1(B)  = \Soc(B)$ and for every $n \geq 1$,  $\Soc_{n+1}(B)$ is the ideal of $B$ such that $\Soc_{n+1}(B)/\Soc_n(B)  = \Soc\left(B/\Soc_n(B)\right)$. Then, following \cite{CedoSmoktunowiczVendramin19}, we say that $B$ has \emph{finite multipermutational level} $m$, if $m$ is the smallest natural such that the socle series sequence reaches $B$.

The following results characterise multipermutation solutions by means of the multipermutational property of braces.

\begin{teorema}[{\cite[Proposition 5.3]{BallesterEstebanPerezC24-JH}}]
\label{teo:bracemulti<->socle}
Let $B$ be a brace. The solution associated $(B, r_B)$ is a multipermutation solution of level $m$ if, and only if, $B$ has finite multipermutational level $m$.
\end{teorema}

\begin{teorema}[{\cite[Theorem 4.13]{CedoJespersKubatVanAntwerpenVerwimp23}}]
\label{teo:solmulti<->structure_multi}
Let $(X,r)$ be a solution. Then, $(X,r)$ is a multipermutation solution if, and only if, $G(X,r)$ is a multipermutational solution.
\end{teorema}

We present  proofs of Corollaries~\ref{cor:sol_brid_multip->sol_brid_sol} and~\ref{cor:sol_mult->sol_sol}

\begin{proof}[Proof of Corollary~\ref{cor:sol_brid_multip->sol_brid_sol}]
Since $(B,r_B)$ is of finite multipermutational level, Theorem~\ref{teo:bracemulti<->socle} yields that the socle series reaches $B$ at some $m \geq 1$:
\[ 0 \subseteq \Soc(B) \subseteq \ldots \subseteq \Soc_m(B) = B\]
Note that the socle series is an abelian series, as $\Soc_k(B)/\Soc_{k-1}(B) = \Soc(B/\Soc_{k-1}(B))$ and the socle of every brace is abelian. Therefore, $B$ is a soluble brace. Hence, Theorem~\ref{teo:soluble_brace<->soluble_sol} yields $(B,r_B)$ is a soluble solution at $0$.
\end{proof}


\begin{proof}[Proof of Corollary~\ref{cor:sol_mult->sol_sol}]
Write $G:= G(X,r)$ the structure brace of $(X,r)$. By Theorem~\ref{teo:solmulti<->structure_multi}, the associated solution $(G,r_G)$ is a multipermutation solution, and by Theorem~\ref{teo:bracemulti<->socle}, the socle series of $G$ reaches $G$ at some $m\geq 1$:
\[ 0 \subseteq \Soc(G) \subseteq \ldots \subset \Soc_m(G) = G\]
Therefore, $G$ is a soluble brace with an abelian series such that $\Soc(G)\cap X$ is not an empty set.  Theorem~\ref{teo:struct_brace_soluble->soluble} yields the final result.
\end{proof}

\section{A worked example}
\label{sec:exemple}

The aim of this section is to give an example of a brace of abelian type of order~$32$ that satisfies earlier definitions for solubility, but is not soluble according to our definition. 



Let $B$ be a brace. Denote $B_1 := B$ and $B_{i+1} := B_i \ast B_i$ for every $i \geq 1$. Bearing in mind the classical definition of group solubility, the following definition turns out to be natural. 

\begin{definicio}

A brace $B$ is said to be \emph{weakly soluble} if the series $\{B_n\}_{n\in \mathbb{N}}$ trivialises at some $n_0\in \mathbb{N}$ and $B_n/B_{n+1}$ is abelian for every $n\in \mathbb{N}$. 
\end{definicio}


Then, the following question naturally arise:

\begin{question}
Are solubility and weak solubility of braces equivalent definitions? If not, can these definitions be equivalent in braces of $\mathfrak{X}$-type for some specific class of groups $\mathfrak{X}$? Natural candidates are braces of soluble type, or more specific, braces of abelian type.
\end{question}

We answer negatively the question by constructing an example of a brace of abelian type which is weakly soluble but not soluble. This example is especially interesting because, such brace has exactly three ideals and just one maximal subbrace, and so it is very close to be simple. 



\begin{exemple} Let $B=\langle a\rangle \times \langle b\rangle \times \langle c \rangle \times \langle d\rangle\cong C_4\times C_2\times C_2\times C_2$, whose operation is written additively. The group $B$ has automorphisms $e$, $f$, $h$ given by
\begin{align*}
  e\colon a&\longmapsto a+c+d&f\colon a&\longmapsto a+b+c&h\colon a&\longmapsto a\\
  b&\longmapsto 2a+c&b&\longmapsto 2a+b& b&\longmapsto b\\
  c&\longmapsto b&c&\longmapsto c&c&\longmapsto c\\
  d&\longmapsto 2a+c+d&d&\longmapsto c+d&d&\longmapsto 2a+d.
\end{align*}
Note that $h^2=1$, $f^2=e^4$, $e^8=1$, $hfh^{-1}=f$, $heh^{-1}=e^5$, and $fef^{-1}=e^{-1}$. It follows that the group $C=\langle e,f,h\rangle$ has order~$32$ and that every element of $C$ can be written in the form $e^rf^sh^t$ with $0\le r\le 1$, $0\le s\le 1$, $0\le t\le 7$. We can construct the semidirect product $G = [B]C$ with respect to this action, whose elements will be written in multiplicative notation like in \cite{BallesterEsteban22}. We will also call $e$, $f$, $h$, $a$, $b$, $c$, $d$ the images of the corresponding elements of $C$ and $B$ by the embeddings of $C$ and $B$ in $G$. \emph{From now on, elements in $B$ are written with additive notation whenever are considered in the additive group of $B$ and with multiplicative notation whenever are considered in $G$. This difference does not suppose a notation clash.}

Let $D=\langle a^3be, af, ch\rangle \le G$. We observe that the image of $D$ under the natural projection from $G$ to $C$ coincides with $C$. Therefore, $\lvert D\rvert \ge \lvert C\rvert$. Furthermore,
\begin{align*}
  (a^3be)^2&=a^3bea^3be=a^3ba^3cda^2ce^2=bde^2,\\
  (a^3be)^4&=bde^2bde^{-2}e^4=bda^2bbce^4=a^2bce^4,\\
  (a^3be)^8&=a^2bce^4a^2bce^{-4}=1,\\
  (a^3be)^5&=a^3bea^2bce^4=a^3ba^2a^2cbe^5=a^3ce^5,\\
  (a^3be)^7&=bde^2a^3ce^5=bdaba^2ce^7=a^3cde^7,\\
  (af)^2&=afaf=aabcf^2=a^2bce^4=(a^3be)^4,\\
  (ch)^2&=chch=1,\\
  (af)(a^3be)(af)^{-1}&=a(a^3ba^3bce^7)a^{-1}=a(ace^7)a^{-1}=a^2ca^3e^7=(a^3be)^7,\\
  (ch)(a^3be)(ch)^{-1}&=c(a^3be^5)c^{-1}=a^3bce^5c=a^3bcece^{-1}e^5\\&=a^3bcbe^5=a^3ce^5=(a^3be)^5,\\
  (ch)(af)(ch)&=c(af)c^{-1}=cacf=af.
\end{align*}
We conclude that $D$ satisfies the same relations as $C$ and so $D$ is isomorphic to a quotient of $C$. Consequently, $D$ is a group of order~$32$ isomorphic to~$C$.

Our next step is to prove that the intersection of $D$ and $C$ (regarded as a subgroup of $G$) is trivial. In order to do so, it will be enough to note that the intersection contains no elements of order~$2$. It is clear that $e^4$ is the only element of order~$2$ in $\langle e\rangle$. Suppose now that we have an element of order~$2$ in $C$ of the form $e^rh$. Since $e^rhe^rh=e^{6r}h^2=e^{6r}=1$, we conclude that $4\mid r$. This gives the elements $h$, $e^4h$. Suppose now that we have an element of the form $e^rf$ of order~$2$. Then $e^rfe^rf=e^re^{-r}f^2=e^4\ne 1$. Suppose now that $e^rfh$ is an element of $C$ of order~$2$. Then $e^rfhefh=e^rfe^{5r}fh^2=e^re^{-5r}f^2=e^{-4r}e^4=e^{4(1-r)}= 1$, which implies that $8\mid 4-4r$, that is, $2\mid 1-r$. This gives the elements $efh$, $e^3fh$, $e^5fh$, $e^7fh$ of order~$2$. We must prove that the corresponding elements in $D$ under the natural isomorphism between $C$ and $D$ are different. It is clear that $(a^3be)^4=a^2bce^4\ne e^4$ and $ch\ne h$. Moreover,
\begin{align*}
  (a^3be)^4(ch)&=a^2bce^4ch=a^2bcce^4h=a^2be^4h\ne e^4h,\\
  (a^3be)(af)(ch)&=a^3becafh=a^3bbacdefh=cdefh\ne efh,\\
  (a^3be)^3(af)(ch)&=(a^3be)^2cdhfe=bde^2cdefh=bda^2cbcde^3fh\\&=a^2e^3fh\ne e^3fh,\\
  (a^3be)^5(af)(ch)&=(a^3be)^2a^2e^3fh=bde^2a^2e^3fh=a^2bdhfe^5\ne hfe^5,\\
  (a^3be)^7(af)(ch)&=bde^2a^2bde^5hf=bda^2a^2bcde^7fh=bce^7fh\ne e^7fh.
\end{align*}
We conclude that $D\cap C=\{1\}$. As mentioned before, if we write $D=\{\delta(c)c\mid c\in C\}$, then this leads to a bijective $1$-cocycle $\delta\colon C\longrightarrow B$ with allows us to construct a  brace structure on $B$.

For the sake of completeness, we include in Table~\ref{tab-deriv} all values of the bijective $1$-cocycle associated to this  brace of abelian type.
\begin{table}
  \caption{Bijective $1$-cocycle associated to the brace $(B,+,\cdot)$}
  \label{tab-deriv}
\small
  \[
  \begin{array}{llllllll}
    x&\delta(x)&x&\delta(x)&x&\delta(x)&x&\delta(x)\\\hline
    1&0&
    h&c&
    f&a&
    fh&a+c\\
    e&3a+b&
    eh&3a&
    ef&b+c+d&
    efh&c+d\\
    e^2&b+d&
    e^2h&2a+b+c+d&
    e^2f&3a+d&
    e^2fh&a+c+d\\
    e^3&3a+b+d&
    e^3h&a+d&
    e^3f&b&
    e^3fh&2a\\
    e^4&2a+b+c&
    e^4h&2a+b&
    e^4f&a+b+c&
    e^4fh&a+b\\
    e^5&3a+c&
    e^5h&3a+b+c&
    e^5f&2a+d&
    e^5fh&2a+b+d\\
    e^6&2a+c+d&
    e^6h&d&
    e^6h&3a+b+c+d&
    e^6fh&a+b+d\\
    e^7&3a+c+d&
    e^7h&a+b+c+d&
    e^7f&2a+c&
    e^7fh&b+d\\\hline
  \end{array}\]
\end{table}

We also observe that these computations can be done directly with a computer algebra system like \textsf{GAP} \cite{GAP4-13-1}. In the \textsf{YangBaxter} library of Vendramin and Konovalov \cite{VendraminKonovalov23-YangBaxter-0.10.3}, this brace is listed as \texttt{SmallBrace(32, 24003)}. 

Our next step is to determine the ideal structure of the brace. We start by classifying left ideals, which can be seen  here as $\lambda$-invariant subgroups of $B$ which are invariant under the conjugation by $C$ in $G$. 

Let us begin with all subgroups of order~$2$. We consider an element of $B$ of the form $ra+sb+tc+ud$ with $0\le r\le 3$, $0\le s\le 1$, $0\le t\le 1$, $0\le u\le 1$ fixed under the action of~$C$. Then, under the action of~$e$,
\begin{align*}
  e(ra+sb+tc+ud)&=r(a+c+d)+s(2a+c)+tb+u(2a+c+d)\\&=(r+2s+2u)a+tb+(s+u)c+(r+u)d\\
  &=ra+sb+tc+ud.
\end{align*}
It follows that $4\mid 2s+2u$, $2\mid t-s$, $2\mid s+u-t$, $2\mid r$, consequently $2\mid t$, $2\mid s$, $2\mid r$, and $2\mid u$. Therefore, $ra+sb+tc+ud\in\langle 2a\rangle$. Since $f(2a)=h(2a)=2a$, we obtain that the unique $C$-invariant subgroup of $B$ of order~$2$ is $\langle 2a\rangle$.

Now suppose that $B$ has a subgroup $L$ of order~$4$ that is invariant under the conjugation by~$C$ in~$G$. We observe that the number of elements of $L$ of order~$2$ must be odd and so, by the action of $C$, there should be at least an element of order~$2$ in $L$ that is fixed under the action of~$C$. This element must be $\langle 2a\rangle$. Now we can consider the action of $C$ on $B/\langle 2a\rangle$. The same argument as before shows that there is at least an element of order~$2$ of $B/\langle 2a\rangle$ fixed under the action of $C$. Furthermore, the action of $C$ on $B/\langle 2a\rangle=\langle \bar a, \bar b, \bar c,\bar d\rangle$ can be described as follows:
\begin{align*}
  e(\bar a)&=\bar a+\bar c+\bar d,&f(\bar a)&=\bar a + \bar b + \bar c, & h(\bar a)&=\bar a\\
  e(\bar b)&=\bar c,&f(\bar b)&=\bar b,&h(\bar b)&=\bar b\\
  e(\bar c)&=\bar b,&f(\bar c)&=\bar c,&h(\bar c)&=\bar c\\
  e(\bar d)&=\bar c+\bar d,&f(\bar d)&=\bar c + \bar d,& h(\bar d)&=\bar d.
\end{align*}
Now take an element of order~$2$, $r\bar a+s\bar b+t\bar c+u\bar d$ of $B/\langle 2a\rangle$, fixed under the action of $C$, with $r$, $s$, $t$, $u\in\{0,1\}$. Then
\begin{align*}
  e(r\bar a +s\bar b+t\bar c+u\bar d)&=r(\bar a+\bar c+\bar d)+s\bar c+t\bar b+u(\bar c+\bar d)\\
                                     &=r\bar a +t\bar b+(r+s+u)\bar c+(r+u)\bar d\\
                                     &=r\bar a+t\bar b+(r+s+u)\bar c+(r+u)\bar d,
\end{align*}
and so $2\mid s-t$, $2\mid r+s+u-t$, $2\mid r$, which implies that $2\mid r$, $2\mid u$ and $2\mid s-t$. Hence this element belongs to $\langle \bar b+\bar c\rangle$. Clearly, this element is fixed under the action of~$G$ and so the unique $C$-invariant subgroup of $B$ of order~$4$ is $\langle 2a, b+c\rangle$.

Let us compute now the $C$-invariant subgroups of order~$8$ of~$B$. Let $M$ be one of these subgroups. A similar argument as before shows that $M$ should contain $\langle 2a\rangle$ and that $M/\langle 2a\rangle$ contains the unique subgroup $C$-invariant subgroup of order~$2$ of $B/\langle 2a\rangle$. Consequently, $\langle 2a,b+c\rangle\le M$. Hence $C$ acts on $M/\langle 2a,b+c\rangle=\langle \tilde a, \tilde b, \tilde d\rangle$ by means of
\begin{align*}
  e(\tilde a)&=\tilde a+\tilde b+\tilde d,&f(\tilde a)&=\tilde a,&h(\tilde a)&=\tilde a,\\
  e(\tilde b)&=\tilde b,&f(\tilde b)&=\tilde b,&h(\tilde b)&=\tilde b,\\
  e(\tilde d)&=\tilde b+\tilde d,&f(\tilde d)&=\tilde b+\tilde d,&h( \tilde d)&=\tilde d.
\end{align*}
Now let us consider an $h$-invariant element of $B$ of the form $r\tilde a+s\tilde b+u\tilde d$, with $r$, $s$, $u\in\{0,1\}$. We have that
\begin{align*}
  e(r\tilde a+s\tilde b+u\tilde d)&=r(\tilde a+\tilde b+\tilde d)+s\tilde b+u(\tilde b+\tilde d)\\
  &=r\tilde a+(r+s+u)\tilde b+(r+u)\tilde d=r\tilde a+s\tilde b+u\tilde d.
\end{align*}
We conclude that $2\mid r+u$ and $2\mid r$, consequently this element belongs to $\langle\tilde b\rangle$. As this element is clearly $C$-invariant, we obtain that the only $C$-invariant subgroup of $B$ is $\langle 2a, b,c\rangle$.

Now we compute the $C$-invariant subgroups of order~$16$ of~$B$. An argument similar to the one used for the $C$-invariant subgroups of order~$16$ shows that they should contain $\langle 2a,b,c\rangle$. The action of $C$ on $B/\langle 2a,b,c\rangle=\langle \hat a, \hat d\rangle$ is given by
\begin{align*}
  e(\hat a)&=\hat a+\hat d,&f(\hat a)&=h(\hat a)=\hat a,\\
  e(\hat d)&=\hat d,&f(\hat d)&=h(\hat d)=\hat d.
\end{align*}
Let $r\hat a+u\hat d$ be a $C$-invariant element of order~$2$ of $B/\langle 2a,b,c\rangle$, with $r$, $u\in\{0,1\}$. Then
\begin{align*}
  e(r\hat a+u\hat d)=r\hat a+(r+u)\hat d=r\hat a+u\hat d,
\end{align*}
which implies that $2\mid r$ and so this element belongs to $\langle \hat d\rangle$. Therefore, the unique $C$-invariant subgroup of $C$ of order~$16$ is $\langle 2a, b, c, d\rangle$.

\paragraph{Left ideals of $B$.} We conclude that the left ideals of this brace, regarded in the additive group, are $L_0 := \{0\}$, $L_1 := \langle 2a\rangle$, $L_2:=\langle 2a,b+c\rangle$, $L_3:=\langle 2a, b, c\rangle$, $L_4:=\langle 2a,b,c,d\rangle$, $L_5 := B$.

\bigskip

Our next step is to compute which of these left ideals correspond are in fact ideals. We observe that
\begin{align*}
  (a^3be)^3(af)(ch)&=(bde^2)(a^3be)(ac)(fh)=bde^2a^3babcd efh=bde^2cdefh\\
                   &=bda^2cbcde^3fh=a^2e^3fh,
\end{align*}
that is, $\delta(e^3fh)=2a$. If $e^3fh$ belongs to a normal subgroup $N$ of $C$, then so does $fe^3fhf^{-1}=e^5fh$ and, consequently, $e^2\in N$. Consequently, $\langle 2a\rangle$ cannot correspond to an ideal of the brace. Moreover,
\[(a^3be)^2=a^3bea^3be=a^3ba^3cda^2ce^2=bde^2,\]
which implies that $\delta(e^2)=b+d$ and so if $2a$ belongs to an ideal, so does $b+d$. The smallest left ideal of $B$ containing $2a$ and $b+d$ must be $\langle 2a, b, c, d\rangle$. As $\delta(e^3fh)=2a$, $\delta(h)=c$, $\delta(e^2)=b+d$, and $\delta(e^4)=2a+b+c$, we have that $E=\langle e^3fh, h, e^2, e^4\rangle=\langle e^2, ef, h\rangle$ has a normal subgroup $\langle e^2\rangle$ of order~$4$ whose quotient is isomorphic to an elementary abelian group of order~$4$, that is, $\lvert E\rvert=16$. As $\lvert C\rvert=32$, $E$ is a normal subgroup of~$C$. 

\paragraph{Ideals of $B$.} We conclude that the unique non-trivial ideal of $B$ is $I_1:= \langle 2a,b,c,d\rangle = L_4$, regarded as a generating subgroup of the additive group.

\paragraph{$B$ is weakly soluble.} We compute the series of iterated star products: $B_1 = B$, $B_{n+1}:= B_n\ast B_n$, for every $n\geq 1$.

We first compute $B*B$. We note that this is an ideal of $B$ containing $\langle c+d, 2a+b+c, b+c, 2a+c, b+c, 2a, c, 2a\rangle=\langle 2a, b,c,d\rangle$, and so it is easy to see that $B*B=\langle 2a,b,c,d\rangle = I_1$.

Now let us compute $I_1*I_1$. First of all, we remember that the multiplicative group of $B*B = I_1$ corresponds to $E=\langle e^2, ef, h\rangle$. Since
\begin{align*}
  e^2(2a)&=e(2a)=2a&ef(2a)&=e(2a)=2a,\\
  e^2(b)&=e(2a+c)=2a+b,&ef(b)&=e(2a+b)=c,\\
  e^2(c)&=e(b)=2a+c,&ef(c)&=e(c)=b,\\
  e^2(d)&=e(2a+c+d)=b+c+d,&ef(d)&=e(c+d)=2a+b+c+d,
\end{align*}
and
\begin{align*}
  h(2a)&=2a,&
  h(b)&=b,\\
  h(c)&=c,&
  h(d)&=2a+d,
\end{align*}
we see that $I_1*I_1=\langle 2a, b+c\rangle = L_2$. The smallest ideal of $B$ containing $I_1*I_1$ is again $I_1$.

Finally, let us compute $L_2*L_2$.
Since $\delta(e^3fh)=2a$, $\delta(e^4)=2a+b+c$, and $\delta(e^7fh)=bc$, we see that
\begin{align*}
  e^3fh(2a)&=2a,&e^4(2a)&=2a,\\
  e^3fh(b+c)&=e^3f(b+c)=e^3(2a+b+c)=b+c,&e^4(b+c)&=b+c,
\end{align*}
we obtain that $\langle e^7fh, e^4\rangle$ acts trivially on $\langle 2a, b+c\rangle$ and so $L_2*L_2=\{0\}$. 

\paragraph{Derived ideal and subideal series.} We have obtained that $B$ has an ideal derived series
\[B\supseteq B*B= I_1 = \langle 2a,b,c,d\rangle\]
terminating at $I_1$, of order~$16$, and a subideal derived series
\begin{align*}
B\supseteq B*B&=\langle 2a, b,c,d\rangle\supseteq (B*B)*(B*B)=\langle 2a,b+c\rangle\\&\supseteq ((B*B)*(B*B))*((B*B)*(B*B))=\{0\}
\end{align*}
terminating at $\{0\}$. Hence, 
\paragraph{$B$ is a weakly soluble brace which is not soluble.}

Moreover, $I_1 = B*B$ is a chief factor which is not isomorphic to a elementary abelian $p$-group for any prime $p$. Therefore, Theorem~\ref{teo:soluble-chief^maximal} does not hold for $B$.

\bigskip

\paragraph{Maximal subbraces of $B$.} Our next goal is to check that this left brace possesses a unique maximal left subbrace, namely $I_1$. It is enough to show that if a subbrace of $B$ contains an element of additive order~$4$, then this subbrace coincides with $B$. Let $R$ be the multiplicative group of the subbrace generated
by $k$ and let $S$ be the additive group of the subbrace generated by~$k$. An element of additive order~$4$ corresponds to an element of the multiplicative group that does not belong to $\langle e^2, ef, h\rangle$. Observe that $2k=2a=\delta(e^3fh)$.

Suppose that $f\in R$, then $\delta(f)=a\in S$, and so $2a\in S$ and $3a\in S$. In particular, $f\in R$, $eh \in R$, and $e^3fh\in R$. It follows that $e^3h\in R$, and so $a+d\in S$, $e^2\in R$, and so $b+c\in S$, and $e^5fh\in R$, which implies that $2a+b+d\in S$. It follows that $a$, $b$, $c$, $d\in S$ and so $B=S$.

Suppose that $e^{2m+1}\in R$ for a given integer $m$. Then $e\in R$ and $e^3\in R$. It follows that $\delta(e)=3a+b\in S$ and $\delta(e^3)=3a+b+d\in S$, which implies that their difference $d\in S$. As $\delta(e^6h)=d$, we have that $e^6h\in R$ and so $h\in R$. Since $2(3a+b)=2a=\delta(e^3fh)\in S$, we have that $e^3fh\in R$ and we conclude that $f\in R$. In particular, $S=K$ and $R=C$.

If $e^{2m}fh\in R$ for a given integer $m$, we obtain that $e^{2m+1}fh(e^3fh)^{-1}=e^{2m-3}\in R$ and so $e\in R$, so we are in the previous case.

Suppose now that $e^{2m}f\in R$ for an integer $m$. Then $e^{2m}f(e^3fh)^{-1}=e^{2m}fh^{-1}f^{-1}e^{-3}=e^{2m}he^{-3}=e^{2m-15}h=e^{2m+1}h\in R$. Since $(e^{2m+1}h)^2=e^{6(2m+1)}h^2=e^{12m+6}=e^{4n+6}\in R$ and $(e^{2m}f)^2=e^4\in R$, we conclude that $e^6\in R$ and so $e^2\in R$. Since $e^{2m}f\in R$, we obtain that $f\in R$ and the situation is reduced to a previous case.

Finally, suppose that $e^{2m+1}h\in R$ for an integer $m$. Then $e^3fh\in R$ and $e^3fh(e^{2m+1}h)^{-1}=e^3fe^{-(2m+1)}=e^{3+2m+1}f=e^{2m+4}f\in R$ and the situation is reduced to the previous case.

We conclude that the unique maximal subbrace of $B$ is $\langle 2a,b,c,d\rangle$.

\paragraph{Minimal example.} We have checked with \textsf{GAP} \cite{GAP4-13-1} and its \textsf{YangBaxter} library \cite{VendraminKonovalov23-YangBaxter-0.10.3} that all weakly soluble skew braces of order up to~$31$ are soluble.

\end{exemple}
%

\bibliographystyle{plain}
\bibliography{bibgroup}

\providecommand{\iflanguage}[3]{#3}\relax \providecommand{\mathfrak}{\mathcal}
  \providecommand{\acceptedin}{\iflanguage{spanish}{Aceptado en}{Accepted in}}
  \providecommand{\accepted}{\iflanguage{spanish}{Aceptado}{Accepted}}
  \providecommand{\talkat}{\iflanguage{spanish}{Charla en}{Talk at}}
  \providecommand{\plenarytalkat}{\iflanguage{spanish}{Charla plenaria
  en}{Plenary talk at}} \providecommand{\JIF}{JIF}
  \providecommand{\PhDcourseat}{\iflanguage{spanish}{Curso de doctorado en}{PhD
  course at}} \providecommand{\transrus}{\iflanguage{spanish}{Traducci{\'o}n
  del art{\'\i}culo original en ruso en}{Translation of the original paper in
  Russian from}}
  \providecommand{\supervisedby}{\iflanguage{spanish}{Direcci{\'o}n:}{Supervised
  by}} \providecommand{\visited}{\iflanguage{spanish}{visitada}{visited}}
  \providecommand{\inpress}{\iflanguage{spanish}{en
  prensa}{\iflanguage{catalan}{en premsa}{in press}}}
  \providecommand{\encurs}{\iflanguage{spanish}{en
  curso}{\iflanguage{catalan}{en curs}{in course}}}
  \providecommand{\aand}{\iflanguage{spanish}{y}{and}}
  \providecommand{\invitedtalkat}{\iflanguage{spanish}{Conferencia invitada
  en}{Invited talk at}} \providecommand{\posterat}{\iflanguage{spanish}{Póster
  en}{Poster at}}
  \providecommand{\invitedtalk}{\iflanguage{spanish}{Conferencia
  invitada}{Invited talk}}
  \providecommand{\oral}{\iflanguage{spanish}{Comunicaci{\'o}n oral}{Oral
  communication}}
  \providecommand{\Poland}{\iflanguage{spanish}{Polonia}{Poland}}
  \providecommand{\Germany}{\iflanguage{spanish}{Alemania}{Germany}}
  \providecommand{\Italy}{\iflanguage{spanish}{Italia}{Italy}}
  \providecommand{\Ireland}{\iflanguage{spanish}{Irlanda}{Ireland}}
  \providecommand{\Hungary}{\iflanguage{spanish}{Hungr{\'\i}a}{Hungary}}
  \providecommand{\Portugal}{\iflanguage{spanish}{Portugal}{Portugal}}
  \providecommand{\Cairo}{\iflanguage{spanish}{El Cairo, Egipto}{Cairo, Egypt}}
  \providecommand{\Bath}{\iflanguage{spanish}{Bath, Inglaterra, Reino
  Unido}{Bath, England, United Kingdom}}
  \providecommand{\presentada}[2]{\iflanguage{spanish}{presentada por
  #1}{presented by #2}}
  \providecommand{\Ponenciaplenaria}{\iflanguage{spanish}{Ponencia
  plenaria}{Plenary talk}}
  \providecommand{\Naples}{\iflanguage{spanish}{N{\'a}poles}{Naples}}
  \def\cprime{$'$} \providecommand{\germ}{\mathfrak}
  \providecommand{\url}{\texttt}
\begin{thebibliography}{10}

\bibitem{BachillerCedoJespersOkninski18}
D.~Bachiller, F.~Ced{\'o}, E.~Jespers, and J.~Okni{\'n}ski.
\newblock Iterated matched products of finite braces and simplicity; new
  solutions of the {Y}ang-{B}axter equation.
\newblock {\em Trans.\ Amer.\ Math.\ Soc.}, 370(7):4881--4907, 2018.

\bibitem{BachillerCedoJespersOkninski19}
D.~Bachiller, F.~Ced{\'o}, E.~Jespers, and J.~Okni{\'n}ski.
\newblock Asymmetric product of left braces and simplicity; new solutions of
  the {Y}ang-{B}axter equation.
\newblock {\em Commun. Contemp. Math.}, 21(8):1850042, 2019.

\bibitem{BallesterEsteban22}
A.~Ballester-Bolinches and R.~Esteban-Romero.
\newblock Triply factorised groups and the structure of skew left braces.
\newblock {\em Commun. Math. Stat.}, 10:353--370, 2022.

\bibitem{BallesterEstebanJimenez22-maximalsubgroups}
A.~Ballester-Bolinches, R.~Esteban-Romero, and P.~Jim{\'e}nez-Seral.
\newblock Maximal subgroups of small index of finite almost simple groups.
\newblock {\em Rev. R. Acad. Cienc. Exactas F{\'\i}s. Nat. Ser. A Math.
  RACSAM}, 116:Article No.~183, 2022.

\bibitem{BallesterEstebanPerezC24-JH}
A.~Ballester-Bolinches, R.~Esteban-Romero, and V.~P{\'e}rez-Calabuig.
\newblock A {J}ordan-{H}{\"o}lder theorem for skew left braces and their
  applications to multipermutation solutions of the {Y}ang-{B}axter equation.
\newblock {\em Proc. Roy. Soc. Edinburgh Sect. A}, 154(3):793--809, 2024.

\bibitem{BallesterEzquerro06}
A.~Ballester-Bolinches and L.~M. Ezquerro.
\newblock {\em Classes of Finite Groups}, volume 584 of {\em Mathematics and
  Its Applications}.
\newblock Springer, Dordrecht, 2006.

\bibitem{BonattoJedlicka23}
M.~Bonatto and P.~Jedli\v{c}ka.
\newblock Central nilpotency of skew braces.
\newblock {\em J. Algebra Appl.}, 22(12):2350255, 2023.

\bibitem{BournFacchiniPompili23}
D.~Bourn, A.~Facchini, and M.~Pompili.
\newblock Aspects of the category {{\textsf{SKB}}} of skew braces.
\newblock {\em Comm.\ Algebra}, 51(5):2129--2143, 2023.

\bibitem{Byott04}
N.~P. Byott.
\newblock {H}opf-{G}alois structures on field extensions with simple {G}alois
  groups.
\newblock {\em Bull. London Math. Soc.}, 36(1):23--29, 2004.

\bibitem{CastelliCatinoPinto19}
M.~Castelli, F.~Catino, and G.~Pinto.
\newblock Indecomposable involutive set-theoretic solutions of the
  {Y}ang–{B}axter equation.
\newblock {\em J.\ Pure Appl.\ Algebra}, 223(10):4477--4493, 2019.

\bibitem{CatinoColazzoStefanelli19}
F.~Catino, I.~Colazzo, and P.~Stefanelli.
\newblock Skew left braces with non-trivial annihilator.
\newblock {\em J. Algebra Appl.}, 18(2):1950033, 2019.

\bibitem{CedoJespersKubatVanAntwerpenVerwimp23}
F.~Ced{\'o}, E.~Jespers, {\L}.~Kubat, A.~Van Antwerpen, and C.~Verwimp.
\newblock On various types of nilpotency of the structure monoid and group of a
  set-theoretic solution of the {Y}ang-{B}axter equation.
\newblock {\em J.\ Pure Appl.\ Algebra}, 227(2):107194, 2023.

\bibitem{CedoJespersOkninski20}
F.~Ced{\'o}, E.~Jespers, and J.~Okni{\'n}ski.
\newblock An abundance of simple left braces with abelian multiplicative
  {S}ylow subgroups.
\newblock {\em Rev. Mat. Iberoamericana}, 36(5):1309--1332, 2020.

\bibitem{CedoOkninski21}
F.~Ced{\'o} and J.~Okni{\'n}ski.
\newblock Constructing finite simple solutions of the {Y}ang-{B}axter equation.
\newblock {\em Adv. Math.}, 391:107968, 39 pp., 2021.

\bibitem{CedoSmoktunowiczVendramin19}
F.~Ced{\'o}, A.~Smoktunowicz, and L.~Vendramin.
\newblock Skew left braces of nilpotent type.
\newblock {\em Proc. London Math. Soc.}, 118(6):1367--1392, 2019.

\bibitem{EtingofSchedlerSoloviev99}
P.~Etingof, T.~Schedler, and A.~Soloviev.
\newblock Set theoretical solutions to the quantum {Y}ang-{B}axter equation.
\newblock {\em Duke Math. J.}, 100:169--209, 1999.

\bibitem{GAP4-13-1}
The GAP~Group.
\newblock {\em {GAP -- Groups, Algorithms, and Programming, Version 4.13.1}},
  2024.
\newblock {\url{http://www.gap-system.org}}.

\bibitem{Gateva-IvanovaCameron12}
T.~Gateva-Ivanova and P.~Cameron.
\newblock Multipermutation solutions of the {Y}ang-{B}axter equation.
\newblock {\em Comm.\ Math.\ Phys.}, 309(3):583--621, 2012.

\bibitem{Go82}
D.~Gorenstein.
\newblock {\em Finite simple groups; an introduction to their classification}.
\newblock Plenum Publishing Corp., New York, London, 1982.

\bibitem{GuarnieriVendramin17}
L.~Guarnieri and L.~Vendramin.
\newblock Skew-braces and the {Y}ang-{B}axter equation.
\newblock {\em Math.\ Comp.}, 86(307):2519--2534, 2017.

\bibitem{JespersKubatVanAntwerpenVendramin21}
E.~Jespers, {\L}.~Kubat, A.~Van~Antwerpen, and L.~Vendramin.
\newblock Radical and weight of skew braces and their applications to structure
  groups of solutions of the {Y}ang-{B}axter equation.
\newblock {\em Adv. Math.}, 385:107767, 2021.

\bibitem{JespersVanAntwerpenVendramin23}
E.~Jespers, A.~Van~Antwerpen, and L.~Vendramin.
\newblock Nilpotency of skew braces and multipermutation solutions of the
  {Y}ang–{B}axter equation.
\newblock {\em Commun. Contemp. Math.}, 25(09):2250064, 2023.

\bibitem{KonovalovSmoktunowiczVendramin21-erratum}
A.~Konovalov, A.~Smoktunowicz, and L.~Vendramin.
\newblock Erratum to the paper ``{O}n skew braces and their ideals''.
\newblock {\em Exp. Math.}, 30(1):346, 2021.

\bibitem{KonovalovSmoktunowiczVendramin21}
A.~Konovalov, A.~Smoktunowicz, and L.~Vendramin.
\newblock On skew braces and their ideals.
\newblock {\em Exp. Math.}, 30(1):95--104, 2021.
\newblock Erratum in \cite{KonovalovSmoktunowiczVendramin21-erratum}.

\bibitem{Rump07}
W.~Rump.
\newblock Braces, radical rings, and the quantum {Y}ang-{B}axter equation.
\newblock {\em J. Algebra}, 307:153--170, 2007.

\bibitem{Vendramin16}
L.~Vendramin.
\newblock Extensions of set-theoretic solutions of the {Y}ang–{B}axter
  equation and a conjecture of {G}ateva-{I}vanova.
\newblock {\em J.\ Pure Appl.\ Algebra}, 220:2064--2076, 2016.

\bibitem{VendraminKonovalov23-YangBaxter-0.10.3}
L.~Vendramin and O.~Konovalov.
\newblock {\em Yang{B}axter: combinatorial solutions for the {Y}ang-{B}axter
  equation}, February 2023.
\newblock version 0.10.3, \url{https://gap-packages.github.io/YangBaxter/}.

\bibitem{Weir55}
A.~J. Weir.
\newblock Sylow {$p$}-subgroups of the classical groups over finite fields with
  characteristic prime to {$p$}.
\newblock {\em Proc.\ Amer.\ Math.\ Soc.}, 6(4):529--533, 1955.

\end{thebibliography}

\end{document}